\theoremstyle{plain} 
\newtheorem{theorem}{Theorem}[section]
\newtheorem{lemma}[theorem]{Lemma}
\newtheorem{corollary}[theorem]{Corollary}
\newtheorem{proposition}[theorem]{Proposition}
\newtheorem{definition}[theorem]{Definition}
\tikzstyle{filled vertex}  = [{circle,draw=blue,fill=black!50,inner sep=1pt}]  
\tikzstyle{empty vertex}  = [{circle, draw, fill = white, inner sep=1.5pt}]
\title{Ricci Curvature of Strong Product Graphs}
\author{
	Guiqiang Mou\thanks{School of Computer Science and Engineering, Central South University, Changsha, China. {guiqiang.mou@csu.edu.cn}}
}
\date{}
\begin{document}
	\maketitle
	\captionsetup[figure]{labelfont={bf},name={Fig.},labelsep=period}
	
	\begin{abstract}
		
		We establish for the first time the explicit curvature formulas for the horizontal and vertical edges of the strong product of two regular graphs. 
		We complement this result with showing that there does not exist an analogous formula for the curvatures of diagonal edges except for a special case,
		and providing a sharp lower bound for them in terms of the curvatures of the factors. 
		This gives the curvature formulas for all the edges of the product of a complete graph and a regular graph.
		We also present an accessible and simpler proof of the curvature formulas for all the edges of the Cartesian product of two regular graphs, originally established by Lin, Lu, and Yau [2011].
		
	\end{abstract}
	
	\section{Introduction}
	
	Ricci curvature is a fundamental concept in differential geometry and serves as a crucial analytical tool in various areas of mathematics and theoretical physics.  Given its importance, over the past forty years, the notion of Ricci curvature has been generalized to various metric spaces~\cite{bakry2006diffusions,chung1996logarithmic,erbar2012ricci,forman2003bochner,lin2010ricci}, including graphs. In particular, in 2009, Ollivier~\cite{ollivier2009ricci} defined a notion of Ricci curvature of Markov chains on arbitrary metric spaces, which is referred to as Ricci curvature.  In 2011, Lin, Lu, and Yau~\cite{lin2011ricci} introduced a modification of Ricci curvature in a limiting form, which is referred to as Lin--Lu--Yau curvature. Both notions can be defined on graphs. 
	
	Both concepts have attracted considerable attention from researchers in graph theory and geometry.
	In the context of graphs, one of the main lines of study is the computation of curvatures of graphs.	
	Although it can be reduced to solving a linear programming, to develop computational formulas for curvatures of graphs is a meaningful and challenging task. 
	In this aspect, 
	Jost and Liu~\cite{jost2014ollivier} first built the exact formula for Ricci curvature for graphs with girth at least 6, especially for trees.  
	Subsequently, Bhattacharya and Mukherjee~\cite{bhattacharya2015exact} extended this result and obtained the exact formulas for Ricci curvature for bipartite graphs and for graphs with girth at least 5. 
	Bourne et al.~\cite{bourne2018ollivier} studied the Ricci curvature of graphs as a function of the chosen idleness, and proved that this idleness function is concave and piecewise linear with at most three linear parts, and at most two linear parts in the case of a regular graph.
	In 2020, Bonini et al.~\cite{bonini2020condensed} proved an exact formula for Lin--Lu--Yau curvature on strongly regular graphs.  
	Very recently, Hehl~\cite{hehl2024ollivier} derived precise expressions for the Lin--Lu--Yau curvature and Ricci curvature with vanishing idleness of an edge of a regular graph using the concept of optimal assignment, which will play a critical role in the proofs of our results. 
	
	In the original paper, Lin et al.~\cite{lin2011ricci} studied the Cartesian product of two regular graphs.  Cushing et al.~\cite{cushing2021curvatures} considered Ricci curvature of the strong product of two regular graphs.  They proved lower curvature bounds for horizontal and vertical edges of~$G\boxtimes H$ in terms of curvatures of the factors when~$G$ and~$H$ are regular graphs. 
	In addition, Zhang and Fang~\cite{zhang2023spectrum} proved that the Lin--Lu--Yau curvature for strong product of two regular graphs can be (upper and lower) bounded by the curvatures of the factors.  
	The definitions will be deferred to the next section.
	
	\begin{theorem}[\cite{cushing2021curvatures}] \label{th:lowerbound}
		Let~$G$ and~$H$ be two regular graphs with vertex degrees~$d_G$ and~$d_H$, respectively.  For any vertex~$x$ of~$G$ and any pair of adjacent vertices~$y_1$ and~$y_2$ of $H$,
		\begin{align*}
			\kappa_{\rm{LLY}}\left( (x, y_1), (x, y_2) \right) &\geq \frac{d_H(d_G + 1)}{d_{G\boxtimes H}}\kappa_{\rm{LLY}}(y_1, y_2),
			\\
			\kappa_0((x, y_1), (x, y_2)) &\geq \frac{d_H(d_G + 1)}{d_{G\boxtimes H}}\kappa_0(y_1, y_2), 
		\end{align*}
		where~$d_{G\boxtimes H} = d_G + d_H + d_Gd_H$ is the vertex degree of~$G\boxtimes H$.
	\end{theorem}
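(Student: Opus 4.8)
The plan is to reduce the curvature of the vertical edge in $G\boxtimes H$ to the curvature of the edge $y_1y_2$ in the factor $H$ evaluated at a rescaled idleness, via a projection-and-lifting argument for the underlying optimal transport problem. Write $d=d_{G\boxtimes H}$, let $\mu^p_{(x,y_i)}$ denote the idle random walk with idleness $p$ on $G\boxtimes H$ and $\mu^{H,q}_{y_i}$ the idle walk on $H$, and recall that the strong-product metric is $d_{G\boxtimes H}((a,b),(c,e))=\max\{d_G(a,c),d_H(b,e)\}$, so that $d_{G\boxtimes H}((x,y_1),(x,y_2))=1$. First I would compute the push-forward of $\mu^p_{(x,y_i)}$ under the coordinate projection $\pi_H\colon G\boxtimes H\to H$. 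Since the closed neighbourhood of $(x,y)$ equals $N[x]\times N[y]$, a direct count gives
\begin{align*}
(\pi_H)_{\#}\,\mu^p_{(x,y_i)}=\mu^{H,\tilde p}_{y_i},\qquad 1-\tilde p=(1-p)\,\frac{d_H(d_G+1)}{d},
\end{align*}
so the factor $\tfrac{d_H(d_G+1)}{d}$ of the theorem is exactly the proportion of steps of the product walk that displace the $H$-coordinate. As $\pi_H$ is $1$-Lipschitz this already yields $W_1^{G\boxtimes H}(\mu^p_{(x,y_1)},\mu^p_{(x,y_2)})\ge W_1^{H}(\mu^{H,\tilde p}_{y_1},\mu^{H,\tilde p}_{y_2})$; the substance of the argument is the matching upper bound.

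For that upper bound I would fix an optimal coupling $\pi^\ast$ for $W_1^{H}(\mu^{H,\tilde p}_{y_1},\mu^{H,\tilde p}_{y_2})$ and lift it to a coupling $\Pi$ of $\mu^p_{(x,y_1)}$ and $\mu^p_{(x,y_2)}$ of no greater cost. The crucial structural fact is that the law of the first coordinate is rigid across fibres: the restriction of $\mu^p_{(x,y_i)}$ to the fibre over any $y'\ne y_i$ is $\tfrac{1-p}{d}$ times the counting measure on $N[x]$, while its restriction to the fibre over $y_i$ is this same uniform part plus one extra atom of mass $\tilde p-w$ at $x$, where $w=(d_G+1)\tfrac{1-p}{d}$. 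Since every vertex of $N[x]$ lies within $G$-distance $1$ of $x$, I can lift $\pi^\ast$ by transporting all uniform parts by the identity on the first coordinate (incurring zero $G$-cost) and routing the single extra atom over $y_1$ from $x$ to the matching atom over $y_2$, again at $x$. Every transported unit then satisfies $d_G\le 1\le d_H$ along its $H$-displacement, so $\max\{d_G,d_H\}=d_H$ and $\mathrm{cost}(\Pi)=\mathrm{cost}(\pi^\ast)$; hence
\begin{align*}
\kappa_p\big((x,y_1),(x,y_2)\big)\ \ge\ \kappa_{\tilde p}(y_1,y_2)\qquad\text{for every }p\in[0,1].
\end{align*}

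The two stated inequalities are then read off from this one. For the Lin--Lu--Yau bound I divide $\kappa_p((x,y_1),(x,y_2))\ge\kappa_{\tilde p}(y_1,y_2)$ by $1-p$ and let $p\to1$; since $\tilde p\to1$ and
\begin{align*}
\frac{\kappa_{\tilde p}(y_1,y_2)}{1-p}=\frac{\kappa_{\tilde p}(y_1,y_2)}{1-\tilde p}\cdot\frac{d_H(d_G+1)}{d}\ \longrightarrow\ \frac{d_H(d_G+1)}{d}\,\kappa_{\rm{LLY}}(y_1,y_2)
\end{align*}
by the definition of $\kappa_{\rm{LLY}}$ on $H$, while the left-hand side tends to $\kappa_{\rm{LLY}}((x,y_1),(x,y_2))$, the first inequality follows. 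For the zero-idleness bound I set $p=0$, so that $\tilde p=d_G/d$ and $1-\tilde p=\tfrac{d_H(d_G+1)}{d}$; the concavity and piecewise-linearity of the idleness function $q\mapsto\kappa_q(y_1,y_2)$ of Bourne et al., together with $\kappa_1(y_1,y_2)=0$, give the chord estimate $\kappa_{\tilde p}(y_1,y_2)\ge(1-\tilde p)\,\kappa_0(y_1,y_2)$, which combined with $\kappa_0((x,y_1),(x,y_2))\ge\kappa_{\tilde p}(y_1,y_2)$ is exactly the second inequality.

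The main obstacle is the lifting step, namely producing a coupling $\Pi$ with the correct $G\boxtimes H$-marginals that simultaneously never pays more than the $H$-displacement. The delicate point is to split the non-uniform fibre over $y_1$ (respectively $y_2$) into a stationary part that matches the uniform target (respectively source) fibre exactly and a moving part carried by the atom at $x$. Here the optimal-assignment description of transport plans on regular graphs due to Hehl is convenient: it supplies an optimal $\pi^\ast$ that keeps the stationary mass maximal and moves the remaining mass only between vertices at $H$-distance one, so that the atom over $y_1$ is matched directly to the atom over $y_2$ and every other move is uniform-to-uniform. This keeps the first-coordinate budget $d_G\le d_H$ satisfied on the nose and makes the cost identity $\mathrm{cost}(\Pi)=\mathrm{cost}(\pi^\ast)$ transparent.
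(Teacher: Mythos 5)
Your route is genuinely different from the paper's. The paper does not reprove Theorem~\ref{th:lowerbound} directly (it is quoted from Cushing et al.); what it establishes is the stronger Theorem~\ref{th:strong}, and it does so combinatorially, via Hehl's optimal-assignment formulas (Theorems~\ref{th:formula} and~\ref{th:differnce}): the set $R_{(x,y_1)}$ is identified with $N_G[x]\times R_{y_1}(y_1,y_2)$, an arbitrary product assignment is split into $d_G+1$ fibre assignments using a perfect matching in a $(d_G+1)$-regular bipartite multigraph (Lemma~\ref{le:infstrong}), and the curvatures are read off. You instead work directly with the measures: you push $\mu^p_{(x,y_i)}$ forward under the $1$-Lipschitz projection $\pi_H$, observe that the pushforward is again an idle walk on $H$ at the rescaled idleness $\tilde p$ with $1-\tilde p=(1-p)\frac{d_H(d_G+1)}{d_{G\boxtimes H}}$ (this computation is correct), and lift couplings back. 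Your master inequality $\kappa_p((x,y_1),(x,y_2))\ge\kappa_{\tilde p}(y_1,y_2)$ is stronger than either displayed inequality --- it controls the whole idleness function, and at $p=0$, combined with the two-piece formula of Bourne et al.\ (note $\tilde p=d_G/d_{G\boxtimes H}\le\frac{1}{d_H+1}$), it evaluates to exactly the right-hand side of equation~(\ref{eq:k0strong}), so it is sharp there. The two deductions at the end (divide by $1-p$ and let $p\to1$; the concavity chord $\kappa_{\tilde p}\ge(1-\tilde p)\kappa_0$ with $\kappa_1=0$) are both fine. What your approach does not give, and the paper's does, is the reverse inequality, i.e.\ the exact formulas.

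The one genuine soft spot is the lifting step at small idleness. Your fibre decomposition --- uniform mass $\frac{1-p}{d_{G\boxtimes H}}$ on all of $N_G[x]$ plus an atom of mass $\tilde p-w$ at $x$ --- is a decomposition into nonnegative measures only when $\tilde p\ge w$, i.e.\ $p\ge\frac{1}{d_{G\boxtimes H}+1}$: the true mass of $\mu^p_{(x,y_1)}$ at $(x,y_1)$ is $p$, so the ``atom'' has mass $p-\frac{1-p}{d_{G\boxtimes H}}$, which equals $-\frac{1}{d_{G\boxtimes H}}$ at $p=0$, precisely where you invoke the master inequality for the second stated bound. As written, ``transport the uniform parts by the identity and route the atom from $x$ to $x$'' is not a valid plan there. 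The repair is easy but must be said: at $p=0$ the source fibre over $y_1$ is uniform on $N_G(x)\times\{y_1\}$ only, while the target fibre over $y_1$ is uniform on all of $N_G[x]\times\{y_1\}$; keep the identity on $N_G(x)$ and fill the target's $\frac{1}{d_{G\boxtimes H}}$ at $(x,y_1)$ with the mass that $\pi^\ast$ imports into $y_1$ from other vertices (symmetrically, export the source's surplus at $(x,y_2)$ along with the mass $\pi^\ast$ moves out of $y_2$). Every first coordinate involved lies in $N_G[x]$ and every such unit has $H$-displacement at least $1$, so the cost identity survives. Relatedly, you should justify that $\pi^\ast$ can be chosen to send the surplus at $y_1$ directly to $y_2$; an arbitrary optimal coupling need not do this, but Hehl's reduction supplies one that does, as you indicate.
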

	
	We prove that the equality in first inequality of Theorem~\ref{th:lowerbound} always holds.  Indeed, we give the exact curvature formulas for horizontal and vertical edges, and one special class of diagonal edges of~$G\boxtimes H$. 
	
	\begin{theorem} \label{th:strong}
		Let~$G$ and~$H$ be two regular graphs with vertex degrees~$d_G$ and~$d_H$, respectively.  For any pair of vertices~$x_{1}, x_{2} \in V(G)$ with~$N_{G}[x_1] = N_{G}[x_2]$ and any pair of adjacent vertices~$y_1$ and~$y_2$ of $H$,
		\begin{align}
			\kappa_{\rm{LLY}}((x_1, y_1), (x_2, y_2)) & = \frac{d_H(d_G + 1)}{d_{G\boxtimes H}}\kappa_{\rm LLY}(y_1, y_2), 
			\label{eq:LLYstrong}
			\\
			\kappa_0((x_1, y_1), (x_2, y_2)) & = \frac{d_Gd_H}{d_{G\boxtimes H}}\kappa_{\rm LLY}(y_1, y_2) + \frac{d_H}{d_{G\boxtimes H}}\kappa_0(y_1, y_2),
			\label{eq:k0strong}
		\end{align}
		where~$d_{G\boxtimes H} = d_G + d_H + d_Gd_H$ is the vertex degree of~$G\boxtimes H$.
	\end{theorem}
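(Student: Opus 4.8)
The plan is to work through the Kantorovich dual (one-Lipschitz potentials) and the transport primal (explicit couplings) in tandem, leaning on two structural features. First, the strong-product metric is the $\ell^\infty$-combination $d_{G\boxtimes H}((g,h),(g',h'))=\max\{d_G(g,g'),d_H(h,h')\}$. Second, the hypothesis $N_G[x_1]=N_G[x_2]=:A$ forces the two closed product-neighborhoods to share the same $G$-factor, namely $N_{G\boxtimes H}[(x_i,y_i)]=A\times N_H[y_i]$. I write $\nu_y$ for the uniform measure on $N_H[y]$ and $\mu^0_y$ for the uniform measure on the open neighborhood of $y$, and use the analogous product measures with subscripts $(x_i,y_i)$. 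Since $d((x_1,y_1),(x_2,y_2))=\max\{d_G(x_1,x_2),1\}=1$ in both the vertical ($x_1=x_2$) and the diagonal ($x_1\sim x_2$) case, every curvature is computed against unit distance.

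For \eqref{eq:LLYstrong} I would first reduce to a single idleness value. As $G\boxtimes H$ is regular of degree $D:=d_{G\boxtimes H}$, the idleness result of Bourne et al.\ gives $\kappa_{\rm LLY}=\tfrac{D+1}{D}\kappa_{1/(D+1)}$, and $\mu^{1/(D+1)}_{(x,y)}$ is exactly the uniform measure on $A\times N_H[y]$. So it suffices to evaluate $W_1$ between $\mathrm{Unif}(A)\otimes\nu_{y_1}$ and $\mathrm{Unif}(A)\otimes\nu_{y_2}$. The upper bound comes from the coupling that fixes the $A$-coordinate and transports $\nu_{y_1}$ to $\nu_{y_2}$ optimally in each fibre, costing $W_1^H(\nu_{y_1},\nu_{y_2})$; the matching lower bound comes from lifting an optimal potential $f_H$ on $H$ to $f(g,h):=f_H(h)$, which is $1$-Lipschitz on the product precisely because the $\max$-metric dominates $d_H$. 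Applying the same Bourne relation in $H$ (degree $d_H$, breakpoint $1/(d_H+1)$) gives $W_1^H(\nu_{y_1},\nu_{y_2})=1-\tfrac{d_H}{d_H+1}\kappa_{\rm LLY}(y_1,y_2)$, and unwinding both relations with $D+1=(d_G+1)(d_H+1)$ yields \eqref{eq:LLYstrong}.

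For \eqref{eq:k0strong} I would bound $\kappa_0=1-W_1(\mu^0_{(x_1,y_1)},\mu^0_{(x_2,y_2)})$ from both sides. Decomposing $\mu^0_{(x_1,y_1)}-\mu^0_{(x_2,y_2)}$ over the fibres $\{g\}\times V(H)$ and pushing forward to $H$, a short computation shows that in both cases the total equals $\tfrac1D\big[(\mathbb{1}_{N_H(y_1)}-\mathbb{1}_{N_H(y_2)})+d_G(\mathbb{1}_{N_H[y_1]}-\mathbb{1}_{N_H[y_2]})\big]$. Testing against the lift of a single $f_H$ that is simultaneously optimal for the open-neighborhood problem at $p=0$ and the closed-neighborhood problem at $p=1/(d_H+1)$ — a common optimizer exists because the idleness function of $H$ is linear on the whole segment $[0,1/(d_H+1)]$, so the maximizing affine dual at an interior point is forced to coincide at both endpoints — gives the upper bound $\kappa_0\le\tfrac{d_H}{D}\kappa_0(y_1,y_2)+\tfrac{d_Gd_H}{D}\kappa_{\rm LLY}(y_1,y_2)$. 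For the reverse inequality in the vertical case I would transport fibrewise: the central fibre $\{x\}\times V(H)$ carries exactly the open-neighborhood problem (cost $\tfrac{d_H}{D}W_1^H(\mu^0_{y_1},\mu^0_{y_2})$), while each of the other $d_G$ fibres carries the closed-neighborhood problem (cost $\tfrac{d_H+1}{D}W_1^H(\nu_{y_1},\nu_{y_2})$), and summing reproduces the same value.

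The main obstacle is the lower bound for the diagonal case $x_1\neq x_2$. Now the fibrewise masses fail to balance: the fibre over $x_1$ is short one unit (its source is $\mathbb{1}_{N_H(y_1)}$ but its target is $\mathbb{1}_{N_H[y_2]}$) while the fibre over $x_2$ carries a surplus unit, so any fibrewise plan must ship mass $1/D$ between the two, and a careless routing overpays by exactly $1/D$, missing \eqref{eq:k0strong} by $\tfrac{d_H}{D}(\kappa_{\rm LLY}(y_1,y_2)-\kappa_0(y_1,y_2))$. The resolution again exploits the $\max$-structure: a single length-one step in $H$ costs the same, namely $1$, whether taken inside one fibre or between the fibres over $x_1$ and $x_2$ (where $d_G(x_1,x_2)=1$), so the surplus unit can be absorbed into the $\nu$- and $\mu^0$-transports instead of being paid for separately, realizing cost $\tfrac1D\big[(d_H+1)W_1^H(\nu_{y_1},\nu_{y_2})+d_HW_1^H(\mu^0_{y_1},\mu^0_{y_2})\big]$ on the two special fibres and matching the dual bound. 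I expect the explicit construction and optimality verification of this coupling — most naturally phrased through Hehl's optimal-assignment description of $\kappa_0$ on regular graphs — to be the technical heart of the argument.
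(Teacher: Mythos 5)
Your proposal is correct, but it takes a genuinely different route from the paper's. The paper stays entirely inside Hehl's optimal-assignment framework: it expresses both curvatures of the product edge through $\text{OPT}_{G\boxtimes H}$ and $\text{MAX}_{G\boxtimes H}$, and proves $\text{OPT}_{G\boxtimes H}=(d_G+1)\text{OPT}_H$ and $\text{MAX}_{G\boxtimes H}=\text{MAX}_H$; the harder direction of the first identity is obtained by splitting the product $R$-set $N_G[x_1]\times R_{y_1}(y_1,y_2)$ into $d_G+1$ sub-assignments via a perfect matching in an auxiliary regular bipartite multigraph. You instead evaluate the Wasserstein distances at two specific idleness values ($\alpha=1/(D+1)$ for $\kappa_{\rm LLY}$ via the Bourne et al.\ linearity, with $D=d_{G\boxtimes H}$, and $\alpha=0$ for $\kappa_0$), getting lower bounds on $W$ by lifting Kantorovich potentials from $H$ and upper bounds by fibrewise couplings. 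Your duality argument replaces the paper's matching decomposition and is arguably cleaner, and your common-optimizer observation (an $f_H$ simultaneously optimal at $\alpha=0$ and $\alpha=1/(d_H+1)$ exists because the idleness function of $H$ is affine on that segment while the dual objective is affine in $\alpha$) replaces the $\text{MAX}$ lemma and, as a bonus, removes the paper's case split on $|\bigtriangleup(y_1,y_2)|=d_H-1$. The one step you flag but do not carry out, the diagonal-case coupling, does work; here is the missing construction. Take an optimal bijection $\sigma:N_H[y_1]\to N_H[y_2]$ that fixes $\bigtriangleup(y_1,y_2)\cup\{y_1,y_2\}$ and an optimal bijection $\tau:N_H(y_1)\to N_H(y_2)$, with costs $C_{\rm cl}$ and $C_{\rm op}$ respectively. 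Send $(x_2,h)\mapsto(x_2,\sigma(h))$ for $h\in N_H[y_1]\setminus\{y_2\}$, $(x_1,h)\mapsto(x_1,\tau(h))$ for $h\in N_H(y_1)\setminus\{y_2\}$, $(x_2,y_2)\mapsto(x_1,\tau(y_2))$, and $(x_1,y_2)\mapsto(x_1,y_2)$. Since $\sigma(y_2)=y_2$, the first block still costs $C_{\rm cl}$; since $\tau(y_2)\in N_H(y_2)$ forces $\tau(y_2)\neq y_2$, the single cross-fibre move costs $\max\{1,d_H(y_2,\tau(y_2))\}=d_H(y_2,\tau(y_2))$, so the two special fibres together cost exactly $C_{\rm cl}+C_{\rm op}$, matching your dual bound.
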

	
	The proof of Theorem~\ref{th:strong} can be found in Section~\ref{sec:strong product}.
	Theorem~\ref{th:strong} subsumes a special class of diagonal edges of $G\boxtimes H$. 
	For diagonal edge with endpoints~$(x_1, y_1)$ and~$(x_2, y_2)$ such that $N_G[x_1] \neq N_G[x_2]$ and $N_H[y_1] \neq N_H[y_2]$, 
	it is natural to wonder whether there exists an exact formula for~$\kappa_{\rm{LLY}}((x_1, y_1), (x_2, y_2))$ which is purely in terms of~$\kappa_{\rm{LLY}}(x_1, x_2)$, $\kappa_{\rm{LLY}}(y_1, y_2)$, $d_G$, and $d_H$. 
	We give a simple example to explain that no such a formula exists. Further we prove a sharp lower bound for $\kappa_{\rm{LLY}}((x_1, y_1), (x_2, y_2))$ in terms of ~$\kappa_{\rm{LLY}}(x_1, x_2)$ and $\kappa_{\rm{LLY}}(y_1, y_2)$.
	
	\begin{theorem} \label{th:noformula}
		Let~$G$ and~$H$ be two regular graphs with vertex degrees~$d_G$ and~$d_H$, respectively.  For any pair
		of adjacent vertices~$x_1$ and~$x_2$ of $G$ and any pair of adjacent vertices~$y_1$ and~$y_2$ of $H$, where $N_G[x_1] \neq N_G[x_2]$ and $N_H[y_1] \neq N_H[y_2]$,
		\[
			\kappa_{\rm{LLY}}((x_1, y_1), (x_2, y_2)) \geq 
			\frac{d_H(d_G + 1)}{d_{G\boxtimes H}}\kappa_{\rm LLY}(y_1, y_2) + \frac{d_G(d_H + 1)}{d_{G\boxtimes H}}\kappa_{\rm LLY}(x_1, x_2) - X,  
		\] 	
		where 
		\[
		X = 1 - \frac{|R_{x_1}(x_1, x_2)||R_{y_1}(y_1, y_2)| - 1}{d_{G\boxtimes H}}.
		\]
	\end{theorem}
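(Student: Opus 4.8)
The plan is to prove the inequality by exhibiting an explicit assignment, since a lower bound on $\kappa_{\rm LLY}$ is equivalent to an upper bound on the transport cost. Because $G$, $H$, and $G\boxtimes H$ are all regular, I would invoke Hehl's optimal-assignment formula~\cite{hehl2024ollivier}: for an edge $\{u,u'\}$ of a $d$-regular graph, $\kappa_{\rm LLY}(u,u') = 1 + \frac{1-W}{d}$, where $W$ is the minimum cost of a perfect matching between the private-neighbour sets $R_u(u,u')$ and $R_{u'}(u,u')$, the cost of a matched pair being their graph distance. Writing $W_G, W_H, W_\boxtimes$ for the optimal-assignment costs of the edges $\{x_1,x_2\}$, $\{y_1,y_2\}$, and $((x_1,y_1),(x_2,y_2))$, this reduces the theorem to the single inequality $W_\boxtimes \le (d_H+1)W_G + (d_G+1)W_H - a_Ga_H$, where $a_G = |R_{x_1}(x_1,x_2)|$ and $a_H = |R_{y_1}(y_1,y_2)|$. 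Indeed, substituting $W_G = 1 + d_G(1-\kappa_{\rm LLY}(x_1,x_2))$ and the analogous $W_H$ into $\kappa_{\rm LLY}((x_1,y_1),(x_2,y_2)) = 1 + (1-W_\boxtimes)/d_{G\boxtimes H}$ and simplifying yields precisely the stated bound with $X = 1 - (a_Ga_H-1)/d_{G\boxtimes H}$.

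To produce a matching realising that bound, I would first decompose each closed neighbourhood. In $G$, write $N_G[x_1] = S_G \sqcup A_G$, where $S_G = N_G[x_1]\cap N_G[x_2]$ (of size $c_G+2$, with $c_G = |N_G(x_1)\cap N_G(x_2)|$) and $A_G = N_G[x_1]\setminus N_G[x_2] = R_{x_1}(x_1,x_2)$ (of size $a_G = d_G-1-c_G$); define $B_G = R_{x_2}(x_1,x_2)$ symmetrically, so $S_G\sqcup B_G = N_G[x_2]$ and $|A_G| = |B_G| = a_G$. Introduce $S_H, A_H, B_H, c_H, a_H$ for $H$ in the same way. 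Since $N_{G\boxtimes H}[(x_i,y_i)] = N_G[x_i]\times N_H[y_i]$, the private-neighbour set of $(x_1,y_1)$ is $P = (S_G\times A_H)\sqcup(A_G\times S_H)\sqcup(A_G\times A_H)$, and that of $(x_2,y_2)$ is $Q = (S_G\times B_H)\sqcup(B_G\times S_H)\sqcup(B_G\times B_H)$. Fixing optimal factor matchings $\phi_G\colon A_G\to B_G$ and $\phi_H\colon A_H\to B_H$, I would match $P$ to $Q$ block by block: $(s,v)\mapsto(s,\phi_H(v))$ on $S_G\times A_H$, $(u,s)\mapsto(\phi_G(u),s)$ on $A_G\times S_H$, and $(u,v)\mapsto(\phi_G(u),\phi_H(v))$ on $A_G\times A_H$. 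The sizes of the corresponding blocks agree, so this is a bijection $P\to Q$.

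It remains to bound the cost. Here I would use that the strong-product distance is the Chebyshev combination of the factor distances, $d_{G\boxtimes H}((u,v),(u',v')) = \max\{d_G(u,u'), d_H(v,v')\}$. Hence the first two blocks contribute $(c_G+2)W_H$ and $(c_H+2)W_G$ respectively (one coordinate is fixed, so the max reduces to a single factor distance), while the diagonal block contributes $\sum_{u\in A_G}\sum_{v\in A_H}\max\{d_G(u,\phi_G(u)), d_H(v,\phi_H(v))\}$. Bounding $\max\{p,q\}\le p+q-1$ for integers $p,q\ge 1$ (legitimate because matched private neighbours are distinct, hence at distance at least $1$ in each factor) gives $a_HW_G + a_GW_H - a_Ga_H$ for the diagonal block. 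Summing the three contributions and using $(c_H+2)+a_H = d_H+1$ and $(c_G+2)+a_G = d_G+1$ produces the target bound $W_\boxtimes \le (d_H+1)W_G + (d_G+1)W_H - a_Ga_H$. The main obstacle --- and the source of the inequality rather than an equality, hence of the non-existence of a closed formula --- is precisely the diagonal block: the Chebyshev distance $\max\{p,q\}$ is genuinely not an affine function of the factor data, so the slack in $\max\{p,q\}\le p+q-1$ cannot in general be recovered, and the $-a_Ga_H$ term (equivalently the $-X$) is what this slack costs. I would also check that $\phi_G,\phi_H$ may be taken to be the optimal factor assignments simultaneously in all three blocks, which is immediate since any matching yields a valid upper bound on $W_\boxtimes$.
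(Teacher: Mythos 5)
Your proposal is correct and follows essentially the same route as the paper: you build the same block assignment (identity/fibrewise $\phi_H$ on $S_G\times A_H$, fibrewise $\phi_G$ on $A_G\times S_H$, and $(\phi_G,\phi_H)$ on $A_G\times A_H$), apply the same estimate $\max\{p,q\}\le p+q-1$ on the diagonal block, and substitute Hehl's formula exactly as in the paper's proof of Theorem~\ref{th:noformula}. Your disjoint three-block decomposition of $R_{(x_1,y_1)}$ is just a cleaner rewriting of the paper's union $(R_{x_1}(x_1,x_2)\times N_H[y_1])\cup(N_G[x_1]\times R_{y_1}(y_1,y_2))$, so no substantive difference.
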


	Lin et al.~\cite{lin2011ricci}  have established the curvature formulas for all the edges of the Cartesian product of two regular graphs.
	In Section~\ref{sec:Cartesian product}, we will present a simpler proof of Theorem~\ref{th:cartesian} utilizing the result of Hehl~\cite{hehl2024ollivier}.

	\begin{theorem}[\cite{lin2011ricci}] \label{th:cartesian}
		Let~$G$ and~$H$ be two regular graphs with vertex degrees~$d_G$ and~$d_H$, respectively. For any vertex~$x$ of~$G$ and any pair of adjacent vertices~$y_1$ and~$y_2$ of $H$,
		\begin{align*}
			\kappa_{\rm{LLY}}((x, y_1), (x, y_2)) & = \frac{d_H}{d_{G\Box H}}\kappa_{\rm{LLY}}(y_1, y_2),
			\\ 
			\kappa_0((x, y_1), (x, y_2)) & = \frac{d_H}{d_{G\Box H}}\kappa_0(y_1, y_2),
		\end{align*}
		where~$d_{G\Box H} = d_G + d_H$ is the vertex degree of~$G\Box H$.
	\end{theorem}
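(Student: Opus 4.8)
The plan is to invoke Hehl's explicit formulas~\cite{hehl2024ollivier}, which express both $\kappa_0$ and $\kappa_{\rm LLY}$ of an edge $\{u,v\}$ of a $d$-regular graph purely in terms of two local quantities: the number $\triangle(u,v) = |N(u)\cap N(v)|$ of common neighbours, and the cost of an \emph{optimal assignment} --- a minimum-cost partial matching between the private neighbourhoods $R_u(u,v) = N(u)\setminus N[v]$ and $R_v(u,v) = N(v)\setminus N[u]$, in which matched pairs are charged according to their graph distance and unmatched vertices incur a fixed penalty. I will apply these formulas to the vertical edge $e = \{(x,y_1),(x,y_2)\}$ of the $(d_G+d_H)$-regular graph $G\Box H$ and show that every local quantity entering them is controlled by the corresponding quantity of the edge $\{y_1,y_2\}$ in $H$; the horizontal edge then follows by exchanging the roles of $G$ and $H$.

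First I would record the local structure of $e$. Using the fact that graph distance in $G\Box H$ is the sum of the distances in the two coordinates, the neighbours of $(x,y_1)$ split into $G$-type vertices $(x',y_1)$ with $x'\in N_G(x)$ and $H$-type vertices $(x,y_1')$ with $y_1'\in N_H(y_1)$, and likewise for $(x,y_2)$. The common neighbours of the two endpoints are exactly $\{(x,z):z\in N_H(y_1)\cap N_H(y_2)\}$, so $\triangle(e)=\triangle(y_1,y_2)$; moreover the private neighbourhood $R_{(x,y_1)}(e)$ consists of all $d_G$ vertices $(x',y_1)$ together with the lifts $(x,y_1')$ of the $H$-private neighbours $y_1'\in R_{y_1}(y_1,y_2)$, and symmetrically for $(x,y_2)$. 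A short count confirms $|R_{(x,y_1)}(e)| = d_G + |R_{y_1}(y_1,y_2)| = (d_G+d_H)-1-\triangle(y_1,y_2)$, as it must be.

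Next I would analyse the optimal assignment, exploiting that the additive distance forces a clean trichotomy: a $G$-type pair $(x',y_1),(x'',y_2)$ lies at distance $\mathrm{dist}_G(x',x'')+1$, which equals the minimum $1$ exactly when $x'=x''$; an $H$-type pair $(x,y_1'),(x,y_2')$ lies at distance $\mathrm{dist}_H(y_1',y_2')$, reproducing the $H$-problem verbatim; and any mixed $G$-to-$H$ pair lies at distance at least $3$, since an $H$-private neighbour $y_2'$ satisfies $\mathrm{dist}_H(y_1,y_2')\ge 2$. Consequently the parallel matching $(x',y_1)\leftrightarrow(x',y_2)$ pairs every $G$-type vertex at the least possible cost, while the surviving $H$-type vertices inherit precisely the optimal assignment of $\{y_1,y_2\}$ in $H$. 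Feeding $\triangle(e)=\triangle(y_1,y_2)$, the $d_G$ distance-one $G$-matches, and the lifted $H$-assignment into Hehl's formulas and normalising by $d_{G\Box H}=d_G+d_H$ should yield the scaling factor $d_H/(d_G+d_H)$ for both $\kappa_0$ and $\kappa_{\rm LLY}$, the point being that the $d_G$ parallel matches carry exactly the cost that the enlarged normalisation absorbs, so only the $H$-contribution survives in the curvature.

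The main obstacle is to prove that the optimal assignment genuinely decomposes in this way rather than gaining from some global cross-matching. I expect to settle this with a two-sided argument: the parallel-plus-lifted assignment is feasible, which gives one inequality, while the distance trichotomy (mixed matches cost at least $3$, non-parallel $G$-matches cost more than $1$) together with Hehl's characterisation of optimality gives the reverse inequality. Equivalently, one can phrase the lower curvature bound through the explicit transport plan described above and the matching upper bound through a Kantorovich potential obtained by extending the optimal $H$-potential as a constant in the $G$-direction; either route confirms that the $G$-direction contributes only the baseline cost and the $H$-direction contributes exactly $\kappa(y_1,y_2)$ up to the normalisation.
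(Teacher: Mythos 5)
Your overall strategy is the one the paper uses: apply Hehl's formulas to the edge $\{(x,y_1),(x,y_2)\}$ of $G\Box H$, identify $R_{(x,y_1)}$ as the union of the $d_G$ parallel vertices $(x',y_1)$ and the lifted set $\{x\}\times R_{y_1}(y_1,y_2)$, and show via the distance trichotomy (parallel $G$-matches cost $1$, mixed matches cost at least $3$) that an optimal assignment decomposes into the parallel $G$-matching plus a lifted optimal $H$-assignment. This correctly yields $\text{OPT}_{G\Box H}=\text{OPT}_H+d_G$ and hence the $\kappa_{\rm LLY}$ identity, matching the paper's Lemma~\ref{le:infcartesian}; the paper implements your ``reverse inequality'' via an explicit exchange argument (swapping a crossing pair of matches of total cost $6$ for a non-crossing pair of cost at most $6$), which is one concrete way to discharge the obstacle you flag.

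There are, however, two genuine gaps on the $\kappa_0$ side. First, Hehl's formula for $\kappa_0$ (equation~(\ref{eq:k01})) involves $\sup_{\phi\in\mathcal{O}}\sup_z d(z,\phi(z))$ taken over \emph{all} optimal assignments, so exhibiting one well-behaved optimal assignment is not enough: you must also rule out that some \emph{other} optimal assignment of the product edge attains a larger maximal displacement than any optimal assignment of $y_1y_2$ in $H$ (e.g.\ a displacement of $3$ realized by a crossing or non-parallel $G$-match). The paper's Lemma~\ref{le:supcartesian} handles this by showing that the exchange step preserves both optimality and the value of the maximal displacement, and then transfers that displacement to an optimal assignment in $H$; your sketch does not address the ``all optimal assignments'' quantifier. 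Second, the case $R_{y_1}(y_1,y_2)=\emptyset$ breaks your reduction for $\kappa_0$: there $|\bigtriangleup(y_1,y_2)|=d_H-1$, so $\kappa_0(y_1,y_2)$ is computed by the special formula~(\ref{eq:k02}), whereas $|\bigtriangleup((x,y_1),(x,y_2))|=d_H-1<d_{G\Box H}-1$, so the product edge falls under the general formula~(\ref{eq:k01}) with $\text{MAX}_{G\Box H}=1$ (and $\text{MAX}_H$ is not even defined). The two regimes must be reconciled by a separate direct computation, as in Case~1 of the paper's proof; it does not follow from the decomposition argument alone.
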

	
	\section{Preliminaries} \label{sec:preliminary}
	
	Let~$G = (V, E)$ be a graph with vertex set~$V$ and edge set~$E$. We say~$G$ is~\emph{locally finite} if for any~$x\in V$, its degree~$d_x$ is finite. 
	For an edge $xy\in E$, we define 
	\begin{align*}
		\bigtriangleup(x, y) & = N(x)\cap N(y), \\
		R_x(x, y) & = N(x)\setminus (\bigtriangleup(x, y)\cup \{y\}), \\
		R_y(x, y) & = N(y)\setminus (\bigtriangleup(x, y)\cup \{x\}).
	\end{align*}

	A~\emph{probability measure} on~$V$ is a map~$\mu: V \to [0, 1]$ such that~$\sum_{x\in V}\mu(x) = 1$.
	Define the probability measure~$\mu_x^\alpha$ for~$x\in V$ and~\emph{idleness}~$\alpha\in [0, 1]$ by
	\begin{align*}
		\mu_x^\alpha(y) =
		\begin{cases}
			\alpha & \text{if $y = x$,}\\
			\frac{1 - \alpha}{d_x} & \text{if $y\in N(x)$,} \\
			0  & \text{otherwise.}
		\end{cases}
	\end{align*}
	
	\begin{definition}[Wasserstein distance]
		Let~$G = (V, E)$ be a locally finite graph,~$\mu_1$ and~$\mu_2$ be two probability measures on~$V$. The Wasserstein distance between~$\mu_1$ and~$\mu_2$ is defined as 
		\[
		W(\mu_1, \mu_2) = \inf_{\pi\in \Pi(\mu_1, \mu_2)}\sum_{x\in V}\sum_{y\in V}d(x, y)\pi(x, y), 
		\]
		where~$\Pi(\mu_1, \mu_2)$ denotes the set of all maps~$\pi: V\times V \to [0, 1]$ such that 
		\[
		\sum_{y\in V}\pi(x, y) = \mu_1(x), \quad \sum_{x\in V}\pi(x, y) = \mu_2(y). 
		\]
	\end{definition}
	
	Intuitively, to understand~$W(\mu_1, \mu_2)$, one can image~$\mu_1$ and~$\mu_2$ as two piles of earth. Then~$W(\mu_1, \mu_2)$ can be taken as the minimum cost of transporting~$\mu_1$ to~$\mu_2$. 
	
	\begin{definition}[Ricci curvature]
		Let~$G = (V, E)$ be a locally finite graph. For two distinct vertices~$x, y\in V$,~\textit{$\alpha$-Ricci curvature} is defined by 
		\[
		\kappa_{\alpha}^{G}(x,y) = 1 - \frac{W(\mu_{x}^{\alpha}, \mu_{y}^{\alpha})}{d(x,y)}.
		\]
		
	\end{definition}
	
	Lin, Lu, and Yau~\cite{lin2011ricci} first investigated the property of the~\emph{idleness function}~$\alpha\to \kappa_{\alpha}^{G}(x,y)$. They proved that the limit~$\lim_{\alpha\to 1}\frac{\kappa_{\alpha}^G(x,y)}{1 - \alpha}$ exists. Based on this result, they introduced a modification of the Ricci curvature as follows.

	\begin{definition}[Lin--Lu--Yau curvature]
		Let~$G = (V, E)$ be a locally finite graph. For two distinct vertices~$x, y\in V$,~\textit{Lin--Lu--Yau curvature} is defined by 
		\[
		\kappa_{\rm{LLY}}^{G}(x,y) = \lim_{\alpha\to 1}\frac{\kappa_{\alpha}^{G}(x,y)}{1 - \alpha}.
		\]
	\end{definition}
	
	Let us remark that although Ricci curvature and Lin--Lu--Yau curvature are defined for any two distinct vertices in~$V$, this work focuses on pairs of adjacent vertices in $V$.
	Hereafter if the graph is clear from the context, we will omit the superscripts from~$\kappa_{\rm{LLY}}^{G}$ and~$\kappa_{\alpha}^{G}$. 
	
	Recall that the~\emph{Birkhoff polytope}~$\mathcal{B}_n$ is the convex polytope in~$\mathbb{R}^{n\times n}$ whose points are the~$n\times n$ matrices whose entries are nonnegative real numbers and whose rows and columns each add up to 1. 
	The~\emph{Birkhoff's theorem}~\cite{birkhoff1946three} states that the extreme points of~$\mathcal{B}_n$ are the~\emph{permutation matrices}, that is, matrices with exactly one entry 1 in each row and each column and 0 elsewhere.    
	Using Birkhoff's theorem, 
	Hehl~\cite{hehl2024ollivier} 
	reduced the computation of the Wasserstein distance~$W(\mu_x^{\alpha}, \mu_y^{\alpha})$ to an assignment problem between~$R_x(x, y)$ and~$R_y(x, y)$, and derived precise expressions for~$\kappa_{\rm{LLY}}(x,y)$ and~$\kappa_{0}(x,y)$ for any edge $xy$ with $d_x = d_y$, which are given in Theorems~\ref{th:formula} and~\ref{th:differnce}, respectively.

	\begin{definition}[Optimal assignment]
		Let~$G=(V,E)$ be a locally finite graph. Let~$U, W \subset V$ with~$\vert U \vert = \vert W \vert < \infty$. We call a bijection~$\phi: U \to W$ an~\textit{assignment} between~$U$ and~$W$. Denote by~$\mathcal{A}_{UW}$ the set of all assignments between~$U$ and~$W$. We call~$\phi \in \mathcal{A}_{UW}$ an \textit{optimal assignment} between~$U$ and~$W$ if
		\begin{equation*}
			\sum_{z \in U} d(z,\phi(z)) = \inf_{\psi \in \mathcal{A}_{UW}} \sum_{z \in U} d(z,\psi(z)).
		\end{equation*}
	\end{definition}
	
	Let $x, y\in V$ be of equal degree with $xy\in E$.
	For convenience, we will use~$\mathcal{A}_{xy}$ to denote the set of all assignments between~$R_x(x, y)$ and~$R_y(x, y)$, and~$\mathcal{O}_{xy}$ to denote the set of all optimal assignments between~$R_x(x, y)$ and~$R_y(x, y)$.

	\begin{theorem}[\cite{hehl2024ollivier}] \label{th:formula}
		Let~$G = (V, E)$ be a locally finite graph, and
		$x, y\in V$ be of equal degree $d$ with $xy\in E$. 
		Then
		\begin{align}
			\kappa_{\rm LLY}(x, y) = \frac{1}{d}(d + 1 - \inf_{\phi\in \mathcal{A}_{xy}}\sum_{z\in R_x(x, y)}d(z, \phi(z))).
			\label{eq:LLY}
		\end{align}
	\end{theorem}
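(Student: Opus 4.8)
The plan is to compute $W(\mu_x^\alpha,\mu_y^\alpha)$ exactly for $\alpha$ in a left neighbourhood of $1$ and then read off the curvature from the limit. Since $xy\in E$ we have $d(x,y)=1$, so $\kappa_\alpha(x,y)=1-W(\mu_x^\alpha,\mu_y^\alpha)$. Writing $\bigtriangleup=\bigtriangleup(x,y)$, $R_x=R_x(x,y)$, $R_y=R_y(x,y)$, the hypothesis $d_x=d_y=d$ gives the disjoint decompositions $N(x)=\{y\}\cup\bigtriangleup\cup R_x$ and $N(y)=\{x\}\cup\bigtriangleup\cup R_y$ with $|R_x|=|R_y|=d-1-|\bigtriangleup|$, so bijections $\phi\colon R_x\to R_y$ exist. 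Set $S=\inf_{\phi\in\mathcal{A}_{xy}}\sum_{z\in R_x}d(z,\phi(z))$; the goal is to show $\kappa_{\rm LLY}(x,y)=\tfrac1d(d+1-S)$.

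First I would produce a transport plan, hence an upper bound for $W$. Leave the mass $\tfrac{1-\alpha}{d}$ on each $t\in\bigtriangleup$ fixed (cost $0$); keep mass $\tfrac{1-\alpha}{d}$ at $x$, which is exactly what $\mu_y^\alpha$ requires there, and ship the surplus $\alpha-\tfrac{1-\alpha}{d}$ from $x$ to $y$ along the edge $xy$; and move the uniform mass on $R_x$ onto $R_y$ along an optimal assignment $\phi$, transporting $\tfrac{1-\alpha}{d}$ from each $z$ to $\phi(z)$. For $\alpha\ge\tfrac{1}{d+1}$ the surplus at $x$ is nonnegative and fills the demand $\alpha-\tfrac{1-\alpha}{d}$ at $y$ exactly, so the plan is admissible, with cost
\[
W(\mu_x^\alpha,\mu_y^\alpha)\le\Big(\alpha-\tfrac{1-\alpha}{d}\Big)+\tfrac{1-\alpha}{d}\,S.
\]
Substituting into $\kappa_\alpha=1-W$ and dividing by $1-\alpha$ yields $\tfrac{\kappa_\alpha(x,y)}{1-\alpha}\ge\tfrac{d+1-S}{d}$.

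The substance of the proof is the matching lower bound, i.e.\ the optimality of this plan; this is the step I expect to be the main obstacle. I would attack it in two moves. The first is a decoupling (uncrossing) argument showing that some optimal coupling leaves $\bigtriangleup$ fixed, routes the surplus of $x$ entirely to $y$, and moves mass only between $R_x$ and $R_y$. The relevant distances are $d(x,y)=1$, whereas $d(x,w)=d(z,y)=2$ for $z\in R_x,\ w\in R_y$ (such $z,w$ are non-adjacent to $y,x$ respectively, but reachable in two steps), and $d(z,w)\le 3$. An exchange argument then rules out cross-routing for $\alpha$ near $1$: replacing an $x\to y$ together with a $z\to w$ assignment step by the cross pair $x\to w$ and $z\to y$ changes the cost by $(2+2)-(1+d(z,w))=3-d(z,w)\ge 0$, and routing through $\bigtriangleup$ is similarly never cheaper. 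The second move handles the residual block: transport between the uniform measures giving weight $\tfrac{1-\alpha}{d}$ to each point of $R_x$ and of $R_y$ is a transportation problem on equal-weight atoms, so by Birkhoff's theorem~\cite{birkhoff1946three} its optimum is attained at a permutation matrix, i.e.\ at a bijection $\phi\in\mathcal{A}_{xy}$, with cost exactly $\tfrac{1-\alpha}{d}\,S$. Alternatively, Kantorovich--Rubinstein duality gives the bound directly: any $1$-Lipschitz $f$ with $f(x)-f(y)=1$ restricts to a feasible pair $(f|_{R_x},-f|_{R_y})$ for the assignment LP dual, and an optimal dual solution, normalized (using the freedom to shift by a constant so that $0\le f\le 2$ on $R_x$ and $-1\le f\le 1$ on $R_y$) to remain globally $1$-Lipschitz, certifies the bound.

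Combining the two bounds gives $W(\mu_x^\alpha,\mu_y^\alpha)=(\alpha-\tfrac{1-\alpha}{d})+\tfrac{1-\alpha}{d}\,S$ for all $\alpha$ sufficiently close to $1$, so that $\tfrac{\kappa_\alpha(x,y)}{1-\alpha}=\tfrac{d+1-S}{d}$ is constant on that neighbourhood. Letting $\alpha\to 1$ gives $\kappa_{\rm LLY}(x,y)=\tfrac1d(d+1-S)$, which is the claimed formula. The only delicate points are the normalization in the duality argument (equivalently, checking that the exchange argument excludes every cheaper competitor once $\alpha$ is near $1$) and the bookkeeping over the three possible values of $d(z,w)$; the rest is direct computation.
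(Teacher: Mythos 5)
The paper does not prove this statement---it is quoted from Hehl~\cite{hehl2024ollivier}---but your argument is correct and follows the same strategy the paper attributes to that source: fix the mass on the common atoms $\{x\}\cup\{y\}\cup\bigtriangleup(x,y)$, use an exchange/uncrossing argument with the distance values $d(x,y)=1$, $d(x,w)=d(z,y)=2$, $d(z,w)\le 3$ to route the surplus of $x$ entirely to $y$, and reduce the residual $R_x$-to-$R_y$ transport to an assignment problem via Birkhoff's theorem, so that $\kappa_\alpha(x,y)/(1-\alpha)$ is constant for $\alpha\ge\tfrac{1}{d+1}$. The only part I would not lean on is the sketched Kantorovich--Rubinstein alternative (extending the assignment-LP dual potentials to a globally $1$-Lipschitz function needs more care than the stated normalization), but since the exchange argument already closes the lower bound, the proof stands without it.
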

	
	\begin{theorem}[\cite{hehl2024ollivier}] \label{th:differnce}
		Let~$G = (V, E)$ be a locally finite graph, and 
		$x, y\in V$ be of equal degree $d$ with $xy\in E$. If~$|\bigtriangleup(x, y)| < d - 1$, then 
		\begin{align}
			\kappa_0(x, y) = \kappa_{\rm{LLY}}(x, y) - \frac{1}{d}(3 - \sup_{\phi\in \mathcal{O}_{xy}}\sup_{z\in R_x(x, y)}d(z, \phi(z)))
			\label{eq:k01}.
		\end{align}
		If~$|\bigtriangleup(x, y)| = d - 1$, then 
		\begin{align}
			\kappa_0(x, y) = \kappa_{\rm{LLY}}(x, y) - \frac{2}{d}.
			\label{eq:k02}
		\end{align}
	\end{theorem}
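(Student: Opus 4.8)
The plan is to evaluate $\kappa_0(x,y)=1-W(\mu_x^0,\mu_y^0)$ (recall $d(x,y)=1$) and match it against the formula $\kappa_{\rm LLY}(x,y)=\frac{1}{d}(d+1-L)$ of Theorem~\ref{th:formula}, where $L:=\inf_{\phi\in\mathcal{A}_{xy}}\sum_{z\in R_x(x,y)}d(z,\phi(z))$. Writing $M$ for the quantity $\sup_{\phi\in\mathcal{O}_{xy}}\sup_{z\in R_x(x,y)}d(z,\phi(z))$ that appears in~\eqref{eq:k01}, substitution shows that both assertions reduce to a single evaluation of the transport cost: $W(\mu_x^0,\mu_y^0)=\frac{1}{d}$ when $|\bigtriangleup(x,y)|=d-1$, and $W(\mu_x^0,\mu_y^0)=\frac{1}{d}(L+2-M)$ when $|\bigtriangleup(x,y)|<d-1$. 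The first is immediate, since then $R_x(x,y)=R_y(x,y)=\varnothing$ and the measures differ only by the mass $\frac{1}{d}$ sitting at $y$ versus the mass needed at $x$, which is transported along the edge $xy$; this yields~\eqref{eq:k02}. The remainder treats the generic case.

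As in Hehl's reduction, Birkhoff's theorem lets me take the optimal plan to be a bijection, so $W(\mu_x^0,\mu_y^0)=\frac{1}{d}\min_{\sigma}\sum_{z\in N(x)}d(z,\sigma(z))$ over bijections $\sigma\colon N(x)\to N(y)$. An exchange argument shows that some optimal $\sigma$ fixes $\bigtriangleup(x,y)$ pointwise: if $\sigma(c)\neq c$ for a common neighbour $c$, then rerouting so that $c\mapsto c$ does not increase the cost, by the triangle inequality. What remains is a bijection between $A:=R_x(x,y)\cup\{y\}$ and $B:=R_y(x,y)\cup\{x\}$, and the decisive feature is its cost structure: since $x\sim y$, every edge meeting $x$ or $y$ has length exactly $1$, whereas a core edge $d(w,z)$ with $w\in R_x(x,y)$ and $z\in R_y(x,y)$ satisfies $1\leq d(w,z)\leq 3$, the upper bound coming from the path $w\,x\,y\,z$.

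For the upper bound I would exhibit one plan: take $\phi\in\mathcal{O}_{xy}$ attaining $M$ on a pair $(w_0,\phi(w_0))$, and replace the two edges $w_0\to\phi(w_0)$ and $y\to x$ (total length $M+1$) by $w_0\to x$ and $y\to\phi(w_0)$ (total length $2$), keeping $\phi$ elsewhere; this gives cost $L+2-M$. For the lower bound I split over bijections $\tau\colon A\to B$. If $\tau(y)=x$, then $\tau$ restricts to a core assignment and $\mathrm{cost}(\tau)\geq 1+L\geq L+2-M$ because $M\geq 1$. Otherwise $\tau$ sends $y$ to some $z_0\in R_y(x,y)$ and some $w_0\in R_x(x,y)$ to $x$, each at length $1$, and completing the omitted pair to a core assignment $\phi$ gives $\mathrm{cost}(\tau)=2+(C(\phi)-d(w_0,\phi(w_0)))\geq 2+\min_{\phi}(C(\phi)-m(\phi))$, where $C(\phi)=\sum_z d(z,\phi(z))$ and $m(\phi)=\max_z d(z,\phi(z))$.

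The whole result then hinges on the identity $\min_{\phi}(C(\phi)-m(\phi))=L-M$, which is the step I expect to be the main obstacle. The inequality ``$\leq$'' is clear from an optimal $\phi$ attaining $M$. The reverse inequality is the delicate one: it is \emph{false} for general nonnegative costs and survives here only because of the bound $d(w,z)\leq 3$. Indeed, a violating $\phi$, with $\Delta:=C(\phi)-L\geq 0$, would force $m(\phi)>M+\Delta$; combined with $m(\phi)\leq 3$ and $M\geq 1$, and excluding $\Delta=0$ (for which $m(\phi)\leq M$ by definition of $M$), integrality leaves only $M=1$, $\Delta=1$, $m(\phi)=3$, whence $L=|R_x(x,y)|$ yet $C(\phi)\geq 3+(|R_x(x,y)|-1)>L+1=C(\phi)$, a contradiction. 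This establishes $W(\mu_x^0,\mu_y^0)=\frac{1}{d}(L+2-M)$, and substituting $L=d+1-d\,\kappa_{\rm LLY}(x,y)$ from Theorem~\ref{th:formula} produces~\eqref{eq:k01}. The geometric heart of the statement is precisely this diameter-$3$ bound supplied by the edge $xy$.
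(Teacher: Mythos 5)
The paper does not prove this statement: it is imported verbatim from Hehl~\cite{hehl2024ollivier}, so there is no in-paper argument to compare against. Your proposal is, as far as I can check, a correct and self-contained proof. The skeleton (Birkhoff's theorem to reduce $W(\mu_x^0,\mu_y^0)$ to a minimum-cost bijection $N(x)\to N(y)$, then an exchange argument fixing $\bigtriangleup(x,y)$ pointwise) is exactly the reduction the paper's preliminaries attribute to Hehl, and the reduction of both claims to the single evaluation $W(\mu_x^0,\mu_y^0)=\frac{1}{d}(L+2-M)$ is the right bookkeeping. The genuinely nontrivial contribution is your identity $\min_{\phi}(C(\phi)-m(\phi))=L-M$ over core assignments $\phi\colon R_x(x,y)\to R_y(x,y)$: the direction $\le$ is trivial, and your case analysis for $\ge$ (splitting on $\Delta=C(\phi)-L$, using $m(\phi)\le 3$ from the path through $xy$, $M\ge 1$, integrality, and then ruling out the residual case $M=\Delta=1$, $m(\phi)=3$ via $C(\phi)\ge 3+(|R_x(x,y)|-1)$) is airtight. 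I verified the two lower-bound branches as well: when $\tau(y)=x$ the cost is at least $1+L\ge L+2-M$ precisely because $R_x(x,y)\neq\emptyset$ forces $M\ge 1$, which is where the hypothesis $|\bigtriangleup(x,y)|<d-1$ enters; and the degenerate case $|\bigtriangleup(x,y)|=d-1$ reduces to transporting a single atom of mass $\frac{1}{d}$ across the edge $xy$, giving \eqref{eq:k02}. Your closing remark is the right diagnosis: the whole of \eqref{eq:k01} rests on the diameter-$3$ bound between $R_x(x,y)$ and $R_y(x,y)$ supplied by the edge $xy$, without which the identity $\min_{\phi}(C(\phi)-m(\phi))=L-M$ would fail.
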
	
	
	In the~\emph{strong product} of two graphs~$G$ and~$H$, denoted as~$G\boxtimes H$, the vertex set is~$V(G)\times V(H)$, and two distinct vertices~$(x_1, y_1)$ and~$(x_2, y_2)$ are adjacent if and only if~$x_2\in N_G[x_1]$ and~$y_2\in N_H[y_1]$. 
	In other words, one of the following three conditions holds true:
	(i)~$y_1y_2\in E(H)$ and $x_1=x_2$, (ii)~$x_1x_2\in E(G)$ and $y_1=y_2$, or (iii)~$x_1x_2\in E(G)$ and $y_1y_2\in E(H)$.
	These conditions lead to three kinds of edges in product  graph, which are called, respectively, \emph{horizontal}, \emph{vertical}, and~\emph{diagonal} edges. 
	In the~\emph{Cartesian product} of two graphs~$G$ and~$H$, denoted as~$G\Box H$, the vertex set is~$V(G)\times V(H)$, and~$(x_1, y_1)$ and~$(x_2, y_2)$ are adjacent if and only if
	the condition (i) or (ii) holds. 
	
	To present the proofs better, for any two adjacent vertices $(x_1, y_1)$ and $(x_2, y_2)$ of equal degree in~$G\boxtimes H$ or~$G\Box H$, we define the following: 
	\begin{align*}
		\text{OPT}_{G*H} & = \inf_{\phi\in \mathcal{A}_{(x_1, y_1)(x_2, y_2)}}\sum_{z\in R_{(x_1, y_1)}((x_1, y_1), (x_2, y_2))}d(z, \phi(z)), \\
		\text{OPT}_H & = \inf_{\phi\in \mathcal{A}_{y_1y_2}}\sum_{z\in R_{y_1}(y_1, y_2)}d(z, \phi(z)), \\
		\text{MAX}_{G*H} & = \sup_{\phi\in \mathcal{O}_{(x_1, y_1)(x_2, y_2)}}\sup_{z\in R_{(x_1, y_1)}((x_1, y_1), (x_2, y_2))}d(z, \phi(z)), \\ 
		\text{MAX}_H & = \sup_{\phi\in \mathcal{O}_{y_1y_2}}\sup_{z\in R_{y_1}(y_1, y_2)}d(z, \phi(z)).
	\end{align*}
	Here~$G*H$ refers to~$G\boxtimes H$ or~$G\Box H$, and the subscripts of~$\text{OPT}$ and~$\text{MAX}$ indicate on which graph we consider the value of the expression on the right. Similarly, we can define $\text{OPT}_G$ and $\text{MAX}_G$.
	
	\section{Ricci curvature of strong product} \label{sec:strong product}
	
	To prove Theorem~\ref{th:strong}, according to Theorems~\ref{th:formula} and~\ref{th:differnce}, we wish to build the relation between~$\text{OPT}_{G\boxtimes H}$ and~$\text{OPT}_ H$, and the relation between~$\text{MAX}_{G\boxtimes H}$ and~$\text{MAX}_H$.
	
	
	The following theorem is well-known; e.g., it can be easily derived from Hall's Theorem.
	
	\begin{theorem}[Folklore] \label{co:regualrbi}
		Let~$G = (U, V)$ be a regular bipartite graph which may have multiple edges. Then $G$ has a perfect matching. 
	\end{theorem}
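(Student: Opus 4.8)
The plan is to verify Hall's condition on the part $U$ and then invoke Hall's marriage theorem. Write $k$ for the common degree, so that every vertex of $U$ and every vertex of $V$ is incident to exactly $k$ edges counted with multiplicity; we may assume $k \geq 1$, the case $k = 0$ being vacuous. First I would record that $|U| = |V|$ by double counting edges: summing degrees over $U$ gives $k|U| = |E|$, and summing over $V$ gives $k|V| = |E|$, so $|U| = |V|$. This is what lets a $U$-saturating matching be promoted to a perfect matching at the end.

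Next, for an arbitrary subset $S \subseteq U$, let $N(S) \subseteq V$ denote the set of vertices adjacent to at least one vertex of $S$. Every edge incident to a vertex of $S$ has its other endpoint in $N(S)$, so the number of edges (with multiplicity) leaving $S$ equals $k|S|$ and is at most the number of edges incident to $N(S)$, which is exactly $k|N(S)|$ since each vertex of $N(S)$ has degree $k$ and no edge joins two vertices of $V$. Dividing by $k$ gives $|S| \leq |N(S)|$, which is precisely Hall's condition. Applying Hall's theorem then yields a matching saturating $U$, and since $|U| = |V|$ this matching is in fact perfect.

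The only point that requires care is the presence of multiple edges. Here a matching may use at most one edge of any parallel class, but Hall's condition is phrased in terms of the neighborhood $N(S)$, which is a set of vertices and is therefore unaffected by edge multiplicities; the counting argument above tracks multiplicities consistently on both sides of the inequality $k|S| \leq k|N(S)|$. Consequently multiple edges present no genuine obstacle, and this is exactly why the statement follows \emph{easily} from Hall's theorem.
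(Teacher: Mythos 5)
Your proof is correct and follows exactly the route the paper indicates: the paper gives no proof of its own, stating only that the result ``can be easily derived from Hall's Theorem,'' and your argument (double counting to get $|U| = |V|$, verifying Hall's condition via $k|S| \leq k|N(S)|$, and noting that multiplicities do not affect neighborhoods) is the standard derivation being alluded to. Nothing further is needed.
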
	
	
	The next proposition follows from the definition of strong product graphs.
	
	\begin{proposition} \label{pro:distancestrong}
		Let~$G$ and~$H$ be two connected graphs. For any two vertices~$(x_1, y_1)$ and~$(x_2, y_2)$ in~$G\boxtimes H$,
		\[
		d\left((x_1, y_1), (x_2, y_2)\right) = \max\left\{d(x_1, x_2), d(y_1, y_2) \right\}. 
		\]
	\end{proposition}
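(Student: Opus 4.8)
The plan is to establish the identity by proving the two matching inequalities $d((x_1,y_1),(x_2,y_2)) \le \max\{d(x_1,x_2),d(y_1,y_2)\}$ and $d((x_1,y_1),(x_2,y_2)) \ge \max\{d(x_1,x_2),d(y_1,y_2)\}$ separately. Write $a = d(x_1,x_2)$ and $b = d(y_1,y_2)$, and, by the symmetry of the two factors, assume without loss of generality that $a \ge b$, so that the right-hand side equals $a$. Connectedness of $G$ and $H$ guarantees that both distances are finite and that geodesics exist.

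For the upper bound, I would first fix a geodesic $x_1 = u_0, u_1, \ldots, u_a = x_2$ in $G$ and a geodesic $y_1 = v_0, v_1, \ldots, v_b = y_2$ in $H$. I would then stitch these two geodesics into a single walk of length $a$ in $G \boxtimes H$ by advancing both coordinates simultaneously for the first $b$ steps and advancing only the $G$-coordinate for the remaining $a - b$ steps. Concretely, the walk visits $(u_0, v_0), (u_1, v_1), \ldots, (u_b, v_b), (u_{b+1}, v_b), \ldots, (u_a, v_b)$. Each of the first $b$ transitions is a diagonal edge (condition (iii)), while each of the last $a - b$ transitions is a horizontal edge (condition (ii)), so every consecutive pair is indeed adjacent in $G \boxtimes H$. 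This exhibits a walk of length $a$, yielding $d((x_1,y_1),(x_2,y_2)) \le a = \max\{d(x_1,x_2),d(y_1,y_2)\}$.

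For the lower bound, I would argue by projecting onto the factors. Let $(x_1,y_1) = w_0, w_1, \ldots, w_\ell = (x_2,y_2)$ be any geodesic in $G \boxtimes H$, and write $w_i = (p_i, q_i)$. By the definition of adjacency in the strong product, for each consecutive pair $w_i, w_{i+1}$ we have $p_{i+1} \in N_G[p_i]$, so $p_i$ and $p_{i+1}$ are either equal or adjacent in $G$; in either case their distance in $G$ is at most $1$. Applying the triangle inequality along the projected sequence $p_0, \ldots, p_\ell$ then gives $d(x_1, x_2) = d(p_0, p_\ell) \le \ell$, and the symmetric argument on the second coordinate gives $d(y_1, y_2) \le \ell$. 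Hence $\max\{d(x_1,x_2),d(y_1,y_2)\} \le \ell = d((x_1,y_1),(x_2,y_2))$.

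Combining the two bounds yields the claimed formula. I expect no serious obstacle, as both directions are elementary; the only point requiring genuine care is the synchronization in the upper-bound construction, where the availability of diagonal edges (condition (iii)) is precisely what lets a single step advance both coordinates at once, keeping the total walk length at $\max\{a,b\}$ rather than the $a+b$ one would obtain in the Cartesian product.
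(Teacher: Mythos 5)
Your proposal is correct and follows essentially the same route as the paper: the upper bound via the synchronized walk that advances both coordinates along the shorter geodesic and then finishes in one factor, and the lower bound via projection of a geodesic onto each factor. Your phrasing of the lower bound (each step moves each projected coordinate by at most one, then the triangle inequality) is in fact slightly more careful than the paper's appeal to the projections being ``paths,'' since the projections may repeat vertices, but the underlying idea is identical.
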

	\begin{proof}
		Without loss of generality, assume~$d(x_1, x_2)\leq d(y_1, y_2)$. Let~$u_0u_1\ldots u_s$ be a shortest path between~$x_1$ and~$x_2$ in~$G$ where~$u_0 = x_1$ and~$u_s = x_2$. 
		Let~$v_0v_1\ldots v_t$ be a shortest path between~$y_1$ and~$y_2$ in~$H$, where~$v_0 = y_1$ and~$v_t = y_2$.  
		By definition
		\[
		(u_0, v_0)(u_1, v_1)\ldots (u_s, v_s)(u_s, v_{s +1 })\ldots (u_s, v_t)
		\]
		is a path between~$(x_1, y_1)$ and~$(x_2, y_2)$ in~$G\boxtimes H$. 
		Thus, we have 
		\begin{align}
			d((x_1, y_1), (x_2, y_2)) \leq d(y_1, y_2) = \max\{d(x_1, x_2), d(y_1, y_2)\}.
			\label{ineq:disstrleq}
		\end{align}
		
		On the other hand, let~$P$ be a shortest path between~$(x_1, y_1)$ and~$(x_2, y_2)$ in~$G\boxtimes H$. 
		By definition the projection of~$P$ on~$G$, denoted as $P_G$, is a path between~$x_1$ and~$x_2$ in~$G$, 
		and the projection of~$P$ on~$H$, denoted as $P_H$, is a path between~$y_1$ and~$y_2$ in~$H$. 
		Thus, we have 
		\begin{align}
			d((x_1, y_1), (x_2, y_2)) \geq \max\{|P_G|, |P_H|\} \geq \max\{d(x_1, x_2), d(y_1, y_2)\}, 
			\label{ineq:disstrgeq}
		\end{align}
		where~$|P_G|$ and~$|P_H|$ denote the lengths of paths~$P_G$ and~$P_H$, respectively.
		Combining inequalities (\ref{ineq:disstrleq}) and (\ref{ineq:disstrgeq}) finishes the proof. 
	\end{proof}
	
	The next two lemmas establish the relations between ~$\text{OPT}_{G\boxtimes H}$ and~$\text{OPT}_ H$, and between~$\text{MAX}_H$ and~$\text{MAX}_{G\boxtimes H}$, respectively. 
	In product graph, the subgraph induced by 
	the vertex set~$\{(x, y)\mid y\in V(H)\}$ is called an~\emph{$H$-fiber}, and is denoted by~$^{x}H$. 
	
	\begin{lemma} \label{le:infstrong}
		Let~$G$ and~$H$ be two regular graphs with vertex degrees~$d_G$ and~$d_H$, respectively. 
		Let $x_1, x_2\in V(G)$ with~$N_{G}[x_1] = N_{G}[x_2]$, and $y_1y_2\in E(H)$. Then 
		\[
		\text{OPT}_{G\boxtimes H} = (d_G + 1)\text{OPT}_H.
		\]
	\end{lemma}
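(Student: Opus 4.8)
The plan is to establish the two inequalities $\text{OPT}_{G\boxtimes H}\le(d_G+1)\text{OPT}_H$ and $\text{OPT}_{G\boxtimes H}\ge(d_G+1)\text{OPT}_H$ separately, after first pinning down the relevant vertex sets. First I would identify $R_{(x_1,y_1)}((x_1,y_1),(x_2,y_2))$ explicitly. Since $N_{G\boxtimes H}[(x_i,y_i)]=N_G[x_i]\times N_H[y_i]$ and $N_G[x_1]=N_G[x_2]$, a neighbour $(a,b)$ of $(x_1,y_1)$ fails to lie in $N_{G\boxtimes H}[(x_2,y_2)]$ precisely when $b\notin N_H[y_2]$; using $y_1\in N_H[y_2]$ this forces $b\in R_{y_1}(y_1,y_2)$ while placing no constraint on $a\in N_G[x_1]$. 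Hence $R_{(x_1,y_1)}((x_1,y_1),(x_2,y_2))=N_G[x_1]\times R_{y_1}(y_1,y_2)$, and symmetrically $R_{(x_2,y_2)}((x_1,y_1),(x_2,y_2))=N_G[x_1]\times R_{y_2}(y_1,y_2)$. Regularity of $H$ gives $|R_{y_1}(y_1,y_2)|=|R_{y_2}(y_1,y_2)|$, so both product sets have size $(d_G+1)\,|R_{y_1}(y_1,y_2)|$ and assignments between them exist.

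For the upper bound I would lift an optimal assignment on $H$. Fix $\psi\in\mathcal{O}_{y_1y_2}$ and set $\phi(a,b)=(a,\psi(b))$; this is a bijection from $N_G[x_1]\times R_{y_1}(y_1,y_2)$ onto $N_G[x_1]\times R_{y_2}(y_1,y_2)$. By Proposition~\ref{pro:distancestrong}, $d((a,b),(a,\psi(b)))=\max\{d_G(a,a),\,d_H(b,\psi(b))\}=d_H(b,\psi(b))$, so summing over the $d_G+1$ choices of $a$ gives cost exactly $(d_G+1)\text{OPT}_H$, whence $\text{OPT}_{G\boxtimes H}\le(d_G+1)\text{OPT}_H$.

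The lower bound is the crux. Given any $\phi\in\mathcal{A}_{(x_1,y_1)(x_2,y_2)}$, write $\phi(a,b)=(a',b')$; Proposition~\ref{pro:distancestrong} gives $d((a,b),(a',b'))\ge d_H(b,b')$, so it suffices to bound $\sum_{(a,b)}d_H(b,b')$ from below. I would record the $H$-coordinates of $\phi$ as a bipartite multigraph $B$ with parts $R_{y_1}(y_1,y_2)$ and $R_{y_2}(y_1,y_2)$, inserting one edge $bb'$ (weighted by $d_H(b,b')$) for each element $(a,b)$ sent to $(a',b')$. Because $\phi$ is a bijection and each $H$-coordinate is shared by exactly $d_G+1$ elements in each of the two product sets, $B$ is $(d_G+1)$-regular. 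Applying Theorem~\ref{co:regualrbi} repeatedly—removing one perfect matching leaves a $d_G$-regular bipartite multigraph, and so on—decomposes $B$ into $d_G+1$ perfect matchings $M_1,\dots,M_{d_G+1}$. Each $M_i$ is an assignment in $\mathcal{A}_{y_1y_2}$, so its total weight is at least $\text{OPT}_H$; summing over $i$ recovers $\sum_{(a,b)}d_H(b,b')\ge(d_G+1)\text{OPT}_H$.

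Combining the two bounds yields $\text{OPT}_{G\boxtimes H}=(d_G+1)\text{OPT}_H$. The only delicate point is the lower bound: one must verify that the $H$-coordinate data of an arbitrary product assignment really does assemble into a regular bipartite multigraph, so that the decomposition into perfect matchings—and hence the clean splitting of the cost into $d_G+1$ honest $H$-assignments—is available. Everything else is bookkeeping with the product structure of the closed neighbourhoods and the distance formula of Proposition~\ref{pro:distancestrong}.
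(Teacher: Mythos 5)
Your proposal is correct and follows essentially the same route as the paper: the same identification $R_{(x_1,y_1)}((x_1,y_1),(x_2,y_2))=N_G[x_1]\times R_{y_1}(y_1,y_2)$, the same fiberwise lifting of an optimal $H$-assignment for the upper bound, and the same auxiliary $(d_G+1)$-regular bipartite multigraph decomposed into perfect matchings (via Theorem~\ref{co:regualrbi}) together with Proposition~\ref{pro:distancestrong} for the lower bound. The only cosmetic difference is that you phrase the decomposition as splitting an edge-weighted multigraph into $d_G+1$ matchings, whereas the paper extracts vertex groups $X_i$ one matching at a time; these are the same argument.
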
	
	\begin{proof}
		By definition of strong product, under the conditions in lemma, we have 
		\begin{align*}
			R_{(x_1, y_1)}((x_1, y_1), (x_2, y_2)) & = N_G[x_1]\times R_{y_1}(y_1, y_2),
			\\
			R_{(x_2, y_2)}((x_1, y_1), (x_2, y_2)) & = N_G[x_1]\times R_{y_2}(y_1, y_2).
		\end{align*}
		Let~$\phi_1\in \mathcal{O}_{(x_1, y_1)(x_2, y_2)}$, and~$\phi_2\in \mathcal{O}_{y_1y_2}$.
		On the one hand, one can construct an assignment~$\phi'_1\in \mathcal{A}_{(x_1, y_1), (x_2, y_2)}$ by taking the assignment~$\phi_2$ in each~$H$-fiber, $^{x}H$ where~$x\in N_G[x_1]$, that is, $\phi'_1((x, y)) = (x, \phi_2(y))$ for any $(x, y)\in R_{(x_1, y_1)}((x_1, y_1), (x_2, y_2))$.  
		By Proposition~\ref{pro:distancestrong}, 
		\begin{align*}
			d((x, y), \phi'_1((x, y))) = d((x, y), (x, \phi_2(y))) = d(y, \phi_2(y)).
		\end{align*}
		It follows that
		\begin{align}
			\sum_{z\in R_{(x_1, y_1)}((x_1, y_1), (x_2, y_2))}d(z, \phi_1(z)) & \leq
			\sum_{z\in R_{(x_1, y_1)}((x_1, y_1), (x_2, y_2))}d(z, \phi'_1(z)) \nonumber \\
			& = \sum_{x\in N_G[x_1]}\sum_{y\in R_{y_1}(y_1, y_2)}d((x, y), \phi'_1(x, y)) \nonumber \\
			& = \sum_{x\in N_G[x_1]}\sum_{y\in R_{y_1}(y_1, y_2)}d(y, \phi_2(y)) \nonumber \\
			& = (d_G + 1) \sum_{z\in R_{y_1}(y_1, y_2)}d(z, \phi_2(z)).
			\label{ineq:infleqstrong}
		\end{align}
		
		On the other hand, we claim that~$R_{(x_1, y_1)}((x_1, y_1), (x_2, y_2))$ 
		can be partitioned into~$(d_G + 1)$ groups of equal size, denoted as~$X_i$,~$i = 1, 2, \ldots, d_G + 1$ 
		such that for each group~$X_i$, $\{y\mid (x, y)\in X_i\} = R_{y_1}(y_1, y_2)$,
		and the map~$\phi_{X_i}: R_{y_1}(y_1, y_2)\to R_{y_2}(y_1, y_2)$ defined as
		\[
		\phi_{X_i}(y) = y' \quad \text{if } \exists(x, y)\in X_i \text{ such that }\phi_1((x, y)) = (x', y')
		\]
		forms an assignment~$\phi_{X_i}\in \mathcal{A}_{y_1y_2}$.
		Recall that 
		\[
		R_{(x_1, y_1)}((x_1, y_1), (x_2, y_2)) = N_G[x_1]\times R_{y_1}(y_1, y_2).
		\] 
		Consider the auxiliary graph $H'$, which is a bipartite multigraph on vertex sets~$R_{y_1}(y_1, y_2)$ and~$R_{y_2}(y_1, y_2)$, 
		and
		there exists one edge between~$y\in R_{y_1}(y_1, y_2)$ and~$y'\in R_{y_2}(y_1, y_2)$ in $H'$ if and only if there exists one vertex $(x, y)\in N_G[x_1]\times R_{y_1}(y_1, y_2)$ such that~$\phi_1((x, y)) = (x', y')$. 
		Since $\phi_1$ is a bijection between $N_G[x_1]\times R_{y_1}(y_1, y_2)$ and $N_G[x_1]\times R_{y_2}(y_1, y_2)$. 
		Obviously $H'$ is $(d_G + 1)$-regular.
		By Theorem~\ref{co:regualrbi}, $H'$ has a perfect matching $M$. 
		Due to the construction of~$H'$, for each edge $yy'\in M$,
		there exists $(x, y)\in (N_G[x_1]\times R_{y_1}(y_1, y_2))$ such that~$\phi_1((x, y)) = (x', y')$. Let 
		\[
		X' = \{(x, y)\in (N_G[x_1]\times R_{y_1}(y_1, y_2))\mid yy'\in M, \text{ and } \phi_1((x, y)) = (x', y')\}
		\]
		be one group. 
		It should be noted that for an edge~$yy'\in M$, there may exist more than one~$(x, y)\in (N_G[x_1]\times R_{y_1}(y_1, y_2))$ such that~$\phi_1((x, y)) = (x', y')$, and if this is the case, 
		we take only one of them into~$X'$. 
		Remove~$X'$ from~$N_G[x_1]\times R_{y_1}(y_1, y_2)$ and its image~$\phi_1(X')$ from~$N_G[x_1]\times R_{y_2}(y_1, y_2)$, and consider the assignment $\phi_1$ restricted between the remaining sets. 
		Repeat the above argument by $d_G$ times, we will obtain all~$(d_G + 1)$ groups. Thus, the claim holds true.   
		By Proposition~\ref{pro:distancestrong} and the definition of~$\phi_{X_i}$, for each group~$X_i$, and each~$(x, y)\in X_i$,
		\[
		d((x, y), \phi_1((x, y))) \geq d(y, \phi_{X_i}(y)).
		\]
		It follows that
		\begin{align}
			\sum_{z\in R_{(x_1, y_1)}((x_1, y_1), (x_2, y_2))}d(z, \phi_1(z)) & = \sum_{i = 1}^{d_G + 1}\sum_{z\in X_i}d(z, \phi_1(z)) \nonumber \\
			& \geq \sum_{i = 1}^{d_G + 1}\sum_{(x, y)\in X_i}d(y, \phi_{X_i}(y)) \nonumber \\
			& \geq (d_G + 1)\sum_{z\in R_{y_1}(y_1, y_2)}d(z, \phi_2(z)).
			\label{ineq:infgeqstrong}
		\end{align} 
		Combining inequalities (\ref{ineq:infleqstrong}) and (\ref{ineq:infgeqstrong}) finishes the proof.
	\end{proof}
	
	\begin{lemma} \label{le:supstrong}
		Let~$G$ and~$H$ be two regular graphs with vertex degrees~$d_G$ and~$d_H$, respectively. 
		Let $x_1, x_2\in V(G)$ with~$N_{G}[x_1] = N_{G}[x_2]$, and $y_1y_2\in E(H)$. Then 
		\[
		\text{MAX}_{G\boxtimes H} = \text{MAX}_H.
		\]
	\end{lemma}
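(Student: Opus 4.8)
The plan is to establish the two inequalities $\text{MAX}_{G\boxtimes H} \ge \text{MAX}_H$ and $\text{MAX}_{G\boxtimes H} \le \text{MAX}_H$ separately, reusing the structural machinery already developed in the proof of Lemma~\ref{le:infstrong}. Throughout I would lean on Proposition~\ref{pro:distancestrong}, which gives $d((x,y),(x',y')) = \max\{d(x,x'), d(y,y')\}$ and in particular the termwise bound $d((x,y),(x',y')) \ge d(y,y')$ that controls how the product distances dominate the $H$-distances.

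For the lower bound $\text{MAX}_{G\boxtimes H}\ge \text{MAX}_H$, I would take an optimal assignment $\phi_2\in\mathcal{O}_{y_1y_2}$ and a vertex $z^*\in R_{y_1}(y_1,y_2)$ realizing $d(z^*,\phi_2(z^*)) = \text{MAX}_H$ (the supremum is attained since everything is finite). Lifting $\phi_2$ to the product via $\phi'_1((x,y)) = (x,\phi_2(y))$ yields, exactly as in the proof of Lemma~\ref{le:infstrong}, an assignment of total cost $(d_G+1)\text{OPT}_H = \text{OPT}_{G\boxtimes H}$, hence $\phi'_1\in\mathcal{O}_{(x_1,y_1)(x_2,y_2)}$. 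Evaluating at $(x_1,z^*)$ and using $d(x_1,x_1)=0$ in Proposition~\ref{pro:distancestrong} gives $d((x_1,z^*),\phi'_1((x_1,z^*))) = d(z^*,\phi_2(z^*)) = \text{MAX}_H$, so $\text{MAX}_{G\boxtimes H}\ge\text{MAX}_H$.

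The upper bound is the substantive direction. Given an arbitrary $\phi_1\in\mathcal{O}_{(x_1,y_1)(x_2,y_2)}$, I would invoke the partition $R_{(x_1,y_1)}((x_1,y_1),(x_2,y_2)) = X_1\sqcup\cdots\sqcup X_{d_G+1}$ and the induced assignments $\phi_{X_i}\in\mathcal{A}_{y_1y_2}$ constructed in Lemma~\ref{le:infstrong}. The crux is to promote these $\phi_{X_i}$ to optimal assignments of $H$ and to force a termwise distance identity. Concretely, the chain of inequalities in Lemma~\ref{le:infstrong} reads $\text{OPT}_{G\boxtimes H} = \sum_i\sum_{(x,y)\in X_i} d((x,y),\phi_1((x,y))) \ge \sum_i\sum_{(x,y)\in X_i} d(y,\phi_{X_i}(y)) \ge (d_G+1)\text{OPT}_H$, and since Lemma~\ref{le:infstrong} gives $\text{OPT}_{G\boxtimes H} = (d_G+1)\text{OPT}_H$, the two ends coincide and both inequalities are equalities. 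Equality on the right, combined with $\sum_{y\in R_{y_1}(y_1,y_2)} d(y,\phi_{X_i}(y))\ge \text{OPT}_H$ for each of the $d_G+1$ summands, forces each $\phi_{X_i}\in\mathcal{O}_{y_1y_2}$. Equality on the left, combined with the termwise bound $d((x,y),\phi_1((x,y)))\ge d(y,\phi_{X_i}(y))$ from Proposition~\ref{pro:distancestrong}, forces $d((x,y),\phi_1((x,y))) = d(y,\phi_{X_i}(y))$ for every $(x,y)$.

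Finishing up, for any $(x,y)$ lying in a group $X_i$ I would combine the two consequences above: $d((x,y),\phi_1((x,y))) = d(y,\phi_{X_i}(y)) \le \text{MAX}_H$, the last step because $\phi_{X_i}$ is an optimal assignment of $H$. Taking the supremum over $(x,y)$ and over $\phi_1\in\mathcal{O}_{(x_1,y_1)(x_2,y_2)}$ yields $\text{MAX}_{G\boxtimes H}\le\text{MAX}_H$, and the two bounds together give the claim. I expect the main obstacle to be the bookkeeping that simultaneously reads off the two pointwise conclusions from the collapse of the inequality chain: that each induced $\phi_{X_i}$ is optimal in $H$ and that the product distance coincides with the corresponding $H$-distance. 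Both rely on the equality case of a sum of nonnegative terms being forced termwise, and both must be applied to the same fixed optimal $\phi_1$ via the partition it determines.
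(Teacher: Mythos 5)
Your proposal is correct and follows essentially the same route as the paper: lift an optimal $H$-assignment fiberwise to get $\text{MAX}_{G\boxtimes H}\ge\text{MAX}_H$, and use the group partition from Lemma~\ref{le:infstrong} together with the collapse of the inequality chain to get the reverse bound. If anything, your explicit equality-case reasoning (forcing each $\phi_{X_i}$ to be optimal and the distances to agree termwise) spells out a step the paper's proof only asserts.
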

	\begin{proof}
		Let~$\phi_1\in \mathcal{O}_{(x_1, y_1)(x_2, y_2)}$ such that
		\[
		\sup_{z\in R_{(x_1, y_1)}((x_1, y_1), (x_2, y_2))}d(z, \phi_1(z))
		\]
		reaches the maximum over all optimal assignments~$\phi\in \mathcal{O}_{(x_1, y_1)(x_2, y_2)}$.
		Let~$\phi_2\in \mathcal{O}_{y_1y_2}$ such that
		\[
		\sup_{z\in R_{y_1}(y_1, y_2)}d(z, \phi_2(z))
		\]
		reaches the maximum over all optimal assignments~$\phi\in \mathcal{O}_{y_1y_2}$.
		
		On the one hand, by Lemma~\ref{le:infstrong}, one can construct an optimal assignment~$\phi'_1\in \mathcal{O}_{(x_1, y_1)(x_2, y_2)}$ by taking the assignment~$\phi_2$ in each~$H$-fiber, $^{x}H$ where~$x\in N_G[x_1]$. 
		By Proposition~\ref{pro:distancestrong}, for any $(x, y)\in R_{(x_1, y_1)}((x_1, y_1), (x_2, y_2))$,
		\begin{align*}
			d((x, y), \phi'_1((x, y))) = d((x, y), (x, \phi_2(y))) = d(y, \phi_2(y)).
		\end{align*}
		It follows that
		\begin{align}
			\sup_{z\in R_{(x_1, y_1)}((x_1, y_1), (x_2, y_2))}d(z, \phi_1(z)) & \geq \sup_{z\in R_{(x_1, y_1)}((x_1, y_1), (x_2, y_2))}d(z, \phi'_1(z)) \nonumber \\
			& = \sup_{z\in R_{y_1}(y_1, y_2)}d(z, \phi_2(z)).
			\label{ine:supgeqstrong}
		\end{align}
		
		On the other hand, by Lemma~\ref{le:infstrong}, we conclude that~$R_{(x_1, y_1)}((x_1, y_1), (x_2, y_2))$ can be partitioned into~$(d_G + 1)$ groups such that for each group~$X$, 
		the map~$\phi_X$ associated with~$\phi_1$
		forms an optimal assignment~$\phi_X\in \mathcal{O}_{y_1, y_2}$ (recall the proof of Lemma~\ref{le:infstrong}).
		In addition, for each group~$X$, and each~$(x, y)\in X$,
		\begin{align}
			d((x, y), \phi_1((x, y))) = d(y, \phi_X(y)).
			\label{eq:dxy}
		\end{align}
		Let~$z' = (x', y')\in R_{(x_1, y_1)}((x_1, y_1), (x_2, y_2))$ such that 
		\begin{align*}
			d(z', \phi_1(z')) = \sup_{z\in R_{(x_1, y_1)}((x_1, y_1), (x_2, y_2))}d(z, \phi_1(z)).
		\end{align*}
		Denote by~$X'$ the group which contains~$z'$. 
		By equation (\ref{eq:dxy}),
		\begin{align*}
			d(y', \phi_{X'}(y')) = d(z', \phi_1(z')).
		\end{align*}
		It follows that 
		\begin{align}
			\sup_{z\in R_{y_1}(y_1, y_2)}d(z, \phi_2(z)) & \geq \sup_{z\in R_{y_1}(y_1, y_2)}d(z, \phi_{X'}(z)) \nonumber \\
			& = \sup_{z\in R_{(x_1, y_1)}((x_1, y_1), (x_2, y_2))}d(z, \phi_1(z)). 
			\label{ine:supleqstrong}
		\end{align}
		Combining inequalities (\ref{ine:supgeqstrong}) and ($\ref{ine:supleqstrong}$) completes the proof.
	\end{proof}

	We are now ready to prove Theorem \ref{th:strong}. 
	
	\begin{proof}[Proof of Theorem~\ref{th:strong}]
		We first prove equation (\ref{eq:LLYstrong}). 
		By Lemma \ref{le:infstrong}, 
		\begin{align}
			\text{OPT}_{G\boxtimes H} & = (d_G + 1)\text{OPT}_H.
			\label{eq:XGHstrong}
		\end{align}
		Applying equation (\ref{eq:LLY}) on graph~$H$, we get
		\begin{align}
			\text{OPT}_H = d_H - d_H\kappa_{\rm{LLY}}(y_1, y_2) + 1.
			\label{eq:XHstrong}
		\end{align}
		Combine equations (\ref{eq:LLY}), (\ref{eq:XGHstrong}), and (\ref{eq:XHstrong}),
		\begin{align*}
			\kappa_{\rm LLY}((x_1, y_1), (x_2, y_2)) & = \frac{1}{d_{G\boxtimes H}}(d_{G\boxtimes H} + 1 - \text{OPT}_{G\boxtimes H})\\
			& = \frac{d_H(d_G + 1)}{d_{G\boxtimes H}}\kappa_{\rm LLY}(y_1, y_2). 	  
		\end{align*}
		Next we prove equation (\ref{eq:k0strong}).
		
		Case 1. $|\bigtriangleup((x_1, y_1), (x_2, y_2))| < d_{G\boxtimes H} - 1$, i.e.,~$|\bigtriangleup(y_1, y_2)| < d_H - 1$. 
		By Lemma~\ref{le:supstrong}, 
		\begin{align}
			\text{MAX}_{G\boxtimes H} = \text{MAX}_H.
			\label{eq:YGH=YHstrong}
		\end{align}
		Applying equation (\ref{eq:k01}) on~$H$, we get
		\begin{align}
			\text{MAX}_H = d_H(\kappa_0(y_1, y_2) - \kappa_{\rm{LLY}}(y_1, y_2)) + 3.
			\label{eq:YHstrong}
		\end{align}    
		Combine equations (\ref{eq:LLYstrong}), (\ref{eq:k01}), (\ref{eq:YGH=YHstrong}), and (\ref{eq:YHstrong}), 
		\begin{align*}
			\kappa_0((x_1, y_1), (x_2, y_2)) & = \frac{d_H(d_G + 1)}{d_{G\boxtimes H}}\kappa_{\rm LLY}(y_1, y_2) - \frac{1}{d_{G\boxtimes H}}(3 - \text{MAX}_{G\boxtimes H})\\
			& = \frac{d_Gd_H}{d_{G\boxtimes H}}\kappa_{\rm LLY}(y_1, y_2) + \frac{d_H}{d_{G\boxtimes H}}\kappa_0(y_1, y_2). 
		\end{align*}
		
		Case 2. $|\bigtriangleup((x_1, y_1), (x_2, y_2))| = d_{G\boxtimes H} - 1$, i.e.,~$|\bigtriangleup(y_1, y_2)| = d_H - 1$. Applying equation (\ref{eq:k02}) on $H$, we get 
		\begin{align}
			\kappa_0(y_1, y_2) = \kappa_{\rm{LLY}}(y_1, y_2) - \frac{2}{d_H}.
			\label{eq:k0H}
		\end{align}
		Combine equations (\ref{eq:LLYstrong}), (\ref{eq:k02}) and (\ref{eq:k0H}), 
		\begin{align*}
			\kappa_0((x_1, y_1), (x_2, y_2)) & = \frac{d_H(d_G + 1)}{d_{G\boxtimes H}}\kappa_{\rm LLY}(y_1, y_2)  - \frac{2}{d_{G\boxtimes H}} \\
			& = \frac{d_Gd_H}{d_{G\boxtimes H}}\kappa_{\rm{LLY}}(y_1, y_2) + \frac{d_H}{d_{G\boxtimes H}}(\kappa_{\rm{LLY}}(y_1, y_2) - \frac{2}{d_H}) \\
			& = \frac{d_Gd_H}{d_{G\boxtimes H}}\kappa_{\rm {LLY}}(y_1, y_2) + \frac{d_H}{d_{G\boxtimes H}}\kappa_0(y_1, y_2).
		\end{align*}
		This concludes the proof.
	\end{proof}
	
	In a connected graph~$G$, all edges~$x_1x_2\in E(G)$ satisfy~$N[x_1] = N[x_2]$ if and only if $G$ is a complete graph.
	Thus, as an immediate consequence of Theorem \ref{th:strong}, we have
	
	\begin{corollary}
		Let $G$ be a complete graph, and $H$ be a regular graph with vertex degree $d_H$.
		For any pair of adjacent vertices $(x_1, y_1)$ and $(x_2, y_2)$ of $G\boxtimes H$,
		\begin{align*}
			\kappa_{\rm{LLY}}((x_1, y_1), (x_2, y_2)) =
			\begin{cases}
				\frac{d_H(d_G + 1)}{d_{G\boxtimes H}}\kappa_{\rm LLY}(y_1, y_2) & \text{if $R_{y_1}(y_1, y_2) \neq \emptyset$,} \\
				\frac{d_{G\boxtimes H} + 1}{d_{G\boxtimes H}} & \text{otherwise.}	
			\end{cases}
		\end{align*}
		and
		\begin{align*}
			\kappa_0((x_1, y_1), (x_2, y_2)) =
			\begin{cases}
				\frac{d_Gd_H}{d_{G\boxtimes H}}\kappa_{\rm LLY}(y_1, y_2) + \frac{d_H}{d_{G\boxtimes H}}\kappa_0(y_1, y_2)  & \text{if $R_{y_1}(y_1, y_2) \neq \emptyset$,} \\ 
				\frac{d_{G\boxtimes H} - 1}{d_{G\boxtimes H}} & \text{otherwise.}	
			\end{cases}		
		\end{align*}
		Here~$d_{G\boxtimes H} = d_G + d_H + d_Gd_H$ is the vertex degree of~$G\boxtimes H$.
	\end{corollary}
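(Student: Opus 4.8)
The plan is to reduce the entire statement to \textbf{Theorem~\ref{th:strong}} together with the closed-neighbourhood formula for the strong product, letting the two cases be governed precisely by whether $R_{y_1}(y_1,y_2)$ is empty. First I would record the elementary identity $N_{G\boxtimes H}[(x,y)] = N_G[x]\times N_H[y]$, which is immediate from the adjacency definition of $\boxtimes$. Since $G$ is complete, $N_G[x_1]=N_G[x_2]=V(G)$ for \emph{every} pair $x_1,x_2$, so this factorisation gives
\[
N_{G\boxtimes H}[(x_1,y_1)] = V(G)\times N_H[y_1], \qquad N_{G\boxtimes H}[(x_2,y_2)] = V(G)\times N_H[y_2].
\]
Thus the two closed neighbourhoods coincide if and only if $N_H[y_1]=N_H[y_2]$, and a short computation using the partition $N_H(y_1)=\bigtriangleup(y_1,y_2)\sqcup R_{y_1}(y_1,y_2)\sqcup\{y_2\}$ shows that $N_H[y_1]=N_H[y_2]$ is in turn equivalent to $R_{y_1}(y_1,y_2)=\emptyset$. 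Combining these yields the key equivalence
\[
R_{y_1}(y_1,y_2)=\emptyset \iff R_{(x_1,y_1)}\bigl((x_1,y_1),(x_2,y_2)\bigr)=\emptyset,
\]
which is exactly what separates the two branches. Since $G\boxtimes H$ is regular of degree $d_{G\boxtimes H}$, all vertices have equal degree, so Theorems~\ref{th:formula}, \ref{th:differnce}, and~\ref{th:strong} are all available.

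In the case $R_{y_1}(y_1,y_2)\neq\emptyset$, nonemptiness forces $y_1\neq y_2$, and since $y_2\in N_H[y_1]$ we get $y_1y_2\in E(H)$; together with $N_G[x_1]=N_G[x_2]$ this is precisely the hypothesis of Theorem~\ref{th:strong}, which delivers the first lines of both formulas at once. In the complementary case $R_{y_1}(y_1,y_2)=\emptyset$, the equivalence above gives $R_{(x_1,y_1)}((x_1,y_1),(x_2,y_2))=\emptyset$, hence $\text{OPT}_{G\boxtimes H}=0$, and Theorem~\ref{th:formula} yields $\kappa_{\rm LLY}=(d_{G\boxtimes H}+1)/d_{G\boxtimes H}$. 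For $\kappa_0$ I would note that an empty remainder set forces $|\bigtriangleup((x_1,y_1),(x_2,y_2))|=d_{G\boxtimes H}-1$ via the partition $N=\bigtriangleup\sqcup R\sqcup\{\text{other endpoint}\}$, so equation~(\ref{eq:k02}) applies and gives $\kappa_0=\kappa_{\rm LLY}-2/d_{G\boxtimes H}=(d_{G\boxtimes H}-1)/d_{G\boxtimes H}$, matching the second lines.

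The step I expect to be the main obstacle is verifying consistency on the boundary between the two branches, namely edges with $y_1y_2\in E(H)$ but $N_H[y_1]=N_H[y_2]$, where Theorem~\ref{th:strong} still formally applies and must agree with the constant value. This reduces to a one-line check: when $R_{y_1}(y_1,y_2)=\emptyset$, Theorem~\ref{th:formula} on $H$ gives $\kappa_{\rm LLY}(y_1,y_2)=(d_H+1)/d_H$, and substituting into the Theorem~\ref{th:strong} formula produces $\frac{d_H(d_G+1)}{d_{G\boxtimes H}}\cdot\frac{d_H+1}{d_H}=\frac{(d_G+1)(d_H+1)}{d_{G\boxtimes H}}=\frac{d_{G\boxtimes H}+1}{d_{G\boxtimes H}}$, using the arithmetic identity $(d_G+1)(d_H+1)=d_{G\boxtimes H}+1$; the analogous substitution reconciles $\kappa_0$. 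Everything else is routine bookkeeping, so beyond organising this case split cleanly the only care needed is to confirm that vertical edges $y_1=y_2$—where $R_{y_1}(y_1,y_2)$ is vacuous—are correctly absorbed into the $R=\emptyset$ branch, which they are, since $N_H[y_1]=N_H[y_2]$ holds trivially there.
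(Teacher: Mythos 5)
Your proposal is correct and follows essentially the same route as the paper: split on whether $R_{y_1}(y_1,y_2)$ is empty, invoke Theorem~\ref{th:strong} in the nonempty case, and in the empty (or $y_1=y_2$) case observe that $R_{(x_1,y_1)}((x_1,y_1),(x_2,y_2))=\emptyset$, hence $\text{OPT}_{G\boxtimes H}=0$ and $|\bigtriangleup((x_1,y_1),(x_2,y_2))|=d_{G\boxtimes H}-1$, and apply equations~(\ref{eq:LLY}) and~(\ref{eq:k02}). Your extra consistency check on the boundary (using $(d_G+1)(d_H+1)=d_{G\boxtimes H}+1$) is not needed but is a valid sanity check the paper omits.
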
 
	\begin{proof}
		If $R_{y_1}(y_1, y_2) \neq \emptyset$.
		Since $N_G[x_1] = N_G[x_2]$, 
		the result follows directly from Theorem \ref{th:strong}.
		
		If $R_{y_1}(y_1, y_2) = \emptyset$ or $y_ 1 = y_2$. Since $R_{x_1}(x_1, x_2) = \emptyset$, by definition
		\[
		R_{(x_1, y_1)}((x_1, y_1), (x_2, y_2)) = \emptyset.
		\]
		Thus, $\text{OPT}_{G\boxtimes H} = 0$, and $|\bigtriangleup((x_1, y_1), (x_2, y_2))| = d_{G\boxtimes H} -1$. 
		The result follows from equations (\ref{eq:LLY}) and (\ref{eq:k02}). 
	\end{proof}  
	
	Recall the result of  
	Bourne et al.~\cite{bourne2018ollivier}, which states that the full Ricci idleness function $\kappa_{\alpha}(x, y)$ for two adjacent vertices $x$ and $y$ with $d_y | d_x$ can be fully decided by $\kappa_{\rm{LLY}}(x, y)$, $\kappa_{0}(x, y)$, and $d_x$.
	
	\begin{theorem}[\cite{bourne2018ollivier}]
		Let~$G = (V, E)$ be a locally finite graph, and~$x, y\in V$ with $xy\in E$ and~$d_y | d_x$. Then 
		\begin{align}
			\kappa_{\alpha}(x, y) =
			\begin{cases}
				(d_x\kappa_{\rm{LLY}}(x, y) - (d_x + 1)\kappa_0(x, y))\alpha + \kappa_0(x, y) & \text{if $\alpha\in [0, \frac{1}{d_x + 1}]$,}\\
				(1 - \alpha)\kappa_{\rm{LLY}}(x, y) & \text{if $\alpha\in [\frac{1}{d_x + 1}, 1]$.} 
			\end{cases}
			\label{eq:function}
		\end{align}
	\end{theorem}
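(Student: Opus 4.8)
The plan is to work directly with the transport formulation. Since $x$ and $y$ are adjacent we have $d(x,y)=1$, hence $\kappa_{\alpha}(x,y)=1-W(\mu_x^{\alpha},\mu_y^{\alpha})$, and it suffices to understand $\alpha\mapsto W(\mu_x^{\alpha},\mu_y^{\alpha})$. First I would record that each measure is affine in the idleness, $\mu_x^{\alpha}=\alpha\,\delta_x+(1-\alpha)\nu_x$ with $\nu_x$ the uniform measure on $N(x)$, and similarly for $y$. By Kantorovich duality $W(\mu_x^{\alpha},\mu_y^{\alpha})=\max_{f}\sum_{z}f(z)\bigl(\mu_x^{\alpha}(z)-\mu_y^{\alpha}(z)\bigr)$ over $1$-Lipschitz $f$; the objective depends on $f$ only through its values on the finite set $\{x,y\}\cup N(x)\cup N(y)$, so $W$ is a finite maximum of affine functions of $\alpha$, hence convex and piecewise linear. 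Consequently $\kappa_{\alpha}(x,y)$ is concave and piecewise linear in $\alpha$, which already gives the qualitative shape asserted in~\eqref{eq:function}.

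Next I would pin down the two candidate linear pieces from boundary data. At $\alpha=1$ both measures degenerate to point masses, so $W(\delta_x,\delta_y)=d(x,y)=1$ and $\kappa_{1}(x,y)=0$. Because the function is piecewise linear, it is linear on some top interval $[\beta,1]$, and the definition $\kappa_{\rm LLY}(x,y)=\lim_{\alpha\to1}\kappa_{\alpha}(x,y)/(1-\alpha)$ together with $\kappa_{1}(x,y)=0$ forces that piece to be exactly $\alpha\mapsto(1-\alpha)\kappa_{\rm LLY}(x,y)$. At the other end $\alpha=0$ the value is $\kappa_0(x,y)$ by definition. Given concavity and piecewise linearity, the whole statement reduces to showing that there is a single interior breakpoint, located precisely at $\alpha^{\ast}=1/(d_x+1)$: the lower piece is then the segment joining $(0,\kappa_0(x,y))$ to $\bigl(\alpha^{\ast},(1-\alpha^{\ast})\kappa_{\rm LLY}(x,y)\bigr)$, whose slope and intercept are exactly the first branch of~\eqref{eq:function} (continuity at $\alpha^{\ast}$ is then automatic and serves as a consistency check).

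To locate the breakpoint I would analyse the net demand $\delta_{\alpha}(z)=\mu_x^{\alpha}(z)-\mu_y^{\alpha}(z)$, whose support is the finite set above. Its sign on each block $\{x\}$, $\{y\}$, $\bigtriangleup(x,y)$, $R_x(x,y)$, $R_y(x,y)$ changes only where a coordinate vanishes: the mass balances at $y$ exactly at $\alpha=1/(d_x+1)$, where $\mu_x^{\alpha}(y)=\tfrac{1-\alpha}{d_x}$ meets $\mu_y^{\alpha}(y)=\alpha$, and at $x$ exactly at $\alpha=1/(d_y+1)\ge 1/(d_x+1)$. These are the natural candidate breakpoints, and the substance of the argument is to exhibit one explicit transport plan that is optimal on the entire range $[1/(d_x+1),1]$ and another optimal on $[0,1/(d_x+1)]$, certifying optimality in each case by producing a matching $1$-Lipschitz potential; the affine cost of each certified plan then yields a linear piece directly. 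The divisibility hypothesis $d_x=k\,d_y$ enters precisely in the high-idleness plan: each neighbour of $x$ carries mass $\tfrac{1-\alpha}{d_x}$, exactly $1/k$ of the mass $\tfrac{1-\alpha}{d_y}$ on a neighbour of $y$, so the $x$-side neighbours can be grouped into blocks of $k$ and routed to the $y$-side by a balanced integral transport whose feasibility I would obtain from a perfect matching in an auxiliary regular bipartite multigraph, exactly as in Theorem~\ref{co:regualrbi} and Lemma~\ref{le:infstrong}. I expect verifying optimality of this plan across the whole interval $[1/(d_x+1),1]$---so that the potential breakpoint at $1/(d_y+1)$ carries no actual change of slope---to be the main obstacle: divisibility is exactly what prevents a fractional re-routing there, which for general degrees would introduce a genuine third linear part. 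Once the top piece is certified as $(1-\alpha)\kappa_{\rm LLY}(x,y)$ and the unique breakpoint fixed at $1/(d_x+1)$, concavity together with the value $\kappa_0(x,y)$ at $\alpha=0$ determines the lower piece uniquely, yielding~\eqref{eq:function}.
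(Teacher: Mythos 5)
First, a remark on the comparison you asked for: the paper offers no proof of this statement at all --- it is imported verbatim from Bourne et al.~\cite{bourne2018ollivier} and used as a black box --- so there is no in-paper argument to measure your proposal against; I can only assess it on its own terms.

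Your outline is sound and is essentially the strategy of the cited source: Kantorovich duality makes $\alpha\mapsto W(\mu_x^{\alpha},\mu_y^{\alpha})$ a finite maximum of affine functions of $\alpha$, hence $\kappa_{\alpha}(x,y)$ is concave and piecewise linear; the boundary data $\kappa_{0}(x,y)$ at $\alpha=0$ and the slope $-\kappa_{\rm LLY}(x,y)$ at $\alpha=1$ pin down the two branches once one knows the function is linear on each of $[0,\tfrac{1}{d_x+1}]$ and $[\tfrac{1}{d_x+1},1]$; and your consistency check at the breakpoint is correct. The genuine gap is that this last step --- which is the entire mathematical content of the theorem --- is only announced, not carried out. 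Concretely, you still need (i) a transport plan together with a matching $1$-Lipschitz potential that certify optimality simultaneously for \emph{every} $\alpha\in[\tfrac{1}{d_x+1},1]$, in particular showing that the candidate breakpoint at $\tfrac{1}{d_y+1}$ (where the net demand at $x$ changes sign) carries no change of slope; this is precisely where $d_y\mid d_x$ must enter, since for general degrees a genuine third linear piece appears. You also need (ii) the analogous certification on $[0,\tfrac{1}{d_x+1}]$, about which your proposal says essentially nothing: the sign analysis of $\delta_{\alpha}$ only suggests \emph{where} breakpoints could occur, it does not bound their number or rule out additional kinks coming from a change of optimal dual vertex. Your idea of routing the neighbour mass through a perfect matching in a $(d_x/d_y)$-regular bipartite multigraph (Theorem~\ref{co:regualrbi}) is the right device for (i), but until that plan and its dual certificate are written down explicitly on the whole interval, what you have is a correct plan of proof rather than a proof.
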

	
	Substituting equations (\ref{eq:LLYstrong}) and (\ref{eq:k0strong}) into equation (\ref{eq:function}), we get 
	
	\begin{corollary}
		Let~$G$ and~$H$ be two regular graphs with vertex degrees~$d_G$ and~$d_H$, respectively. 
		Let $x_1, x_2\in V(G)$ with~$N_{G}[x_1] = N_{G}[x_2]$, and $y_1y_2\in E(H)$.
		If~$\alpha\in [0, \frac{1}{d_{G\boxtimes H} + 1}]$, then
		\begin{align*}
			\kappa_{\alpha}((x_1, y_1), (x_2, y_2))  = &  d_H(\kappa_{\rm{LLY}}(y_1, y_2) - \kappa_0(y_1, y_2))\alpha 
			+ \frac{d_H}{d_{G\boxtimes H}}(d_G\kappa_{\rm{LLY}}(y_1, y_2) \\
			& +  \kappa_0(y_1, y_2))(1 - \alpha).
		\end{align*}
		If $\alpha\in [\frac{1}{d_{G\boxtimes H} + 1}, 1]$, then
		\[
		\kappa_{\alpha}((x_1, y_1), (x_2, y_2)) = \frac{d_H(d_G + 1)}{d_{G\boxtimes H}}\kappa_{\rm LLY}(y_1, y_2)(1 - \alpha). 
		\] 
	\end{corollary}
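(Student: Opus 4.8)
The plan is to treat this corollary as a direct specialization of the Bourne--et al.\ idleness formula (\ref{eq:function}), so the entire argument reduces to checking a single hypothesis and then performing a substitution followed by routine regrouping. First I would verify that the divisibility condition $d_y \mid d_x$ required by the preceding theorem is met for the pair $x = (x_1, y_1)$ and $y = (x_2, y_2)$. Since $G$ and $H$ are regular, the strong product $G\boxtimes H$ is $d_{G\boxtimes H}$-regular, so both endpoints share the common degree $d_x = d_y = d_{G\boxtimes H}$ and the condition holds trivially. This licenses applying (\ref{eq:function}) with $d_x = d_{G\boxtimes H}$ to the edge $(x_1,y_1)(x_2,y_2)$, whose well-definedness is guaranteed because $N_G[x_1] = N_G[x_2]$ and $y_1y_2 \in E(H)$ place us exactly in the hypotheses of Theorem~\ref{th:strong}.

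For the high-idleness range $\alpha \in [\tfrac{1}{d_{G\boxtimes H}+1}, 1]$ there is nothing to compute: the second branch of (\ref{eq:function}) reads $(1-\alpha)\kappa_{\rm LLY}((x_1,y_1),(x_2,y_2))$, and substituting (\ref{eq:LLYstrong}) produces the claimed expression $\tfrac{d_H(d_G+1)}{d_{G\boxtimes H}}\kappa_{\rm LLY}(y_1,y_2)(1-\alpha)$ verbatim.

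The only genuine work lies in the low-idleness range $\alpha \in [0, \tfrac{1}{d_{G\boxtimes H}+1}]$. Here I would substitute both (\ref{eq:LLYstrong}) and (\ref{eq:k0strong}) into the first branch $\bigl(d_x\kappa_{\rm LLY} - (d_x+1)\kappa_0\bigr)\alpha + \kappa_0$ of (\ref{eq:function}), then regroup the result into a coefficient of $\alpha$ and a coefficient of $(1-\alpha)$ so as to match the stated shape. The constant term is immediately $\kappa_0((x_1,y_1),(x_2,y_2)) = \tfrac{d_H}{d_{G\boxtimes H}}\bigl(d_G\kappa_{\rm LLY}(y_1,y_2) + \kappa_0(y_1,y_2)\bigr)$, which is precisely the $(1-\alpha)$-coefficient in the target formula evaluated at $\alpha = 0$, so that piece lands with no effort.

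The expected obstacle, which is bookkeeping rather than anything conceptual, is simplifying the coefficient of $\alpha$ into the form $d_H(\kappa_{\rm LLY}(y_1,y_2) - \kappa_0(y_1,y_2))$ once the $(1-\alpha)$-term has been split off. This collapse is driven by the two arithmetic identities $d_{G\boxtimes H} - d_G = d_H(d_G+1)$ and $d_{G\boxtimes H} + 1 = (d_G+1)(d_H+1)$, which let the $\kappa_{\rm LLY}(y_1,y_2)$ contributions telescope to $\tfrac{d_H(d_G+1)}{d_{G\boxtimes H}}\kappa_{\rm LLY}(y_1,y_2)$ and the $\kappa_0(y_1,y_2)$ contributions to $-\tfrac{(d_{G\boxtimes H}+1)d_H}{d_{G\boxtimes H}}\kappa_0(y_1,y_2)$, after which subtracting the $\alpha$-part of the $(1-\alpha)$-term reproduces exactly the asserted linear coefficient. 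Verifying these two identities and tracking the $\kappa_{\rm LLY}$ and $\kappa_0$ terms separately is all that the computation requires.
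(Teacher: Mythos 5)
Your proposal is correct and follows exactly the paper's route: the paper proves this corollary by simply substituting equations~(\ref{eq:LLYstrong}) and~(\ref{eq:k0strong}) into the Bourne et al.\ formula~(\ref{eq:function}) with $d_x = d_{G\boxtimes H}$, the divisibility hypothesis being trivial by regularity. One cosmetic remark: in your bookkeeping narrative the intermediate $\kappa_{\rm LLY}(y_1,y_2)$-coefficient should telescope to $\tfrac{d_H^2(d_G+1)}{d_{G\boxtimes H}}$ rather than $\tfrac{d_H(d_G+1)}{d_{G\boxtimes H}}$, but your final regrouping (the $\alpha$-coefficient equals $d_{G\boxtimes H}\bigl(\kappa_{\rm LLY}-\kappa_0\bigr) = d_H\bigl(\kappa_{\rm LLY}(y_1,y_2)-\kappa_0(y_1,y_2)\bigr)$ after adding back the constant term) is right, so this does not affect the argument.
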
	
	
	In what follows, we give an example to show that in $G\boxtimes H$, for diagonal edge with endpoints $(x_1, y_1)$ and $(x_2, y_2)$ such that $N_G[x_1] \neq N_G[x_2]$ and $N_H[y_1] \neq N_H[y_2]$,  
	$\kappa_{\rm{LLY}}((x_1, y_1), (x_2, y_2))$ cannot be uniquely determined by~$\kappa_{\rm{LLY}}(x_1, x_2)$, $\kappa_{\rm{LLY}}(y_1, y_2)$, $d_G$, and $d_H$.  
	As a consequence, no exact formula for~$\kappa_{\rm{LLY}}((x_1, y_1), (x_2, y_2))$ purely in terms of~$\kappa_{\rm{LLY}}(x_1, x_2)$, $\kappa_{\rm{LLY}}(y_1, y_2)$, $d_G$, and $d_H$ exists.
	
	Let $G = C_4$, $H_1$ and $H_2$ be the 3-regular graphs with $y_1y_2$ being an edge, depicted in Figure \ref{Fig}. 
	Let $x_1x_2$ be any one edge of $G$.
	By equation (\ref{eq:LLY}), it can be computed that $\kappa_{\rm{LLY}}^{H_1}(y_1, y_2) = \kappa_{\rm{LLY}}^{H_2}(y_1, y_2) = 0$;
	however, $\kappa_{\rm{LLY}}^{G\boxtimes H_1}((x_1, y_1), (x_2, y_2)) = -1/11$, $\kappa_{\rm{LLY}}^{G\boxtimes H_2}((x_1, y_1), (x_2, y_2)) = 0$ ($OPT_{G\boxtimes H_1} = 13$, $OPT_{G\boxtimes H_2} = 12$). 
	
	\begin{figure}[htb]
		\centering
		\begin{subfigure}[b]{0.3\linewidth}
			\centering
			\begin{tikzpicture}[scale=0.5]
				\node[scale=0.5, shape=circle, minimum size=0.1cm, draw] (v1) at ({18}:2) {};
				\node[scale=0.5, shape=circle, minimum size=0.1cm, draw] (v2) at ({90}:2) {};
				\node[scale=0.5, shape=circle, minimum size=0.1cm, draw] (v3) at ({162}:2) {};
				\node[scale=0.5, shape=circle, minimum size=0.1cm, draw, label=below:$y_1$] (v4) at ({234}:2) {};
				\node[scale=0.5, shape=circle, minimum size=0.1cm, draw, label=below:$y_2$] (v5) at ({306}:2) {};
				\node[scale=0.5, shape=circle, minimum size=0.1cm, draw] (v6) at ({18}:0.8) {};
				\node[scale=0.5, shape=circle, minimum size=0.1cm, draw] (v7) at ({90}:1) {};
				\node[scale=0.5, shape=circle, minimum size=0.1cm, draw] (v8) at ({162}:0.8) {};
				\draw (v5)--(v1)--(v2)--(v3)--(v4)--(v4)--(v5);
				\draw (v5)--(v6)--(v7)--(v8)--(v4);
				\draw (v6)--(v1); 
				\draw (v7)--(v2);
				\draw (v8)--(v3); 	
			\end{tikzpicture}
			\caption{$H_1$}
		\end{subfigure}
		\quad \quad
		\begin{subfigure}[b]{0.3\linewidth}
			\centering
			\begin{tikzpicture}[scale=0.5]
				\node[scale=0.5, shape=circle, minimum size=0.1cm, draw] (v1) at ({0}:2) {};
				\node[scale=0.5, shape=circle, minimum size=0.1cm, draw] (v2) at ({60}:2) {};
				\node[scale=0.5, shape=circle, minimum size=0.1cm, draw] (v3) at ({120}:2) {};
				\node[scale=0.5, shape=circle, minimum size=0.1cm, draw] (v4) at ({180}:2) {};
				\node[scale=0.5, shape=circle, minimum size=0.1cm, draw, label=below:$y_1$] (v5) at ({240}:2) {};
				\node[scale=0.5, shape=circle, minimum size=0.1cm, draw, label=below:$y_2$] (v6) at ({300}:2) {};
				\node[scale=0.5, shape=circle, minimum size=0.1cm, draw] (v7) at ({30}:1) {};
				\node[scale=0.5, shape=circle, minimum size=0.1cm, draw] (v8) at ({320}:0.8) {};
				\node[scale=0.5, shape=circle, minimum size=0.1cm, draw] (v9) at ({220}:0.8) {};
				\node[scale=0.5, shape=circle, minimum size=0.1cm, draw] (v10) at ({150}:1) {};
				\draw (v6)--(v1)--(v2)--(v3)--(v4)--(v5)--(v5)--(v6);
				\draw (v2)--(v7)--(v8)--(v9)--(v10)--(v3); 
				\draw (v1)--(v7);
				\draw (v6)--(v8);
				\draw (v5)--(v9);
				\draw (v4)--(v10);	
			\end{tikzpicture}
			\caption{$H_2$}
		\end{subfigure}	
		\caption{The illustrations of graphs $H_1$ and $H_2$.}
		\label{Fig}
	\end{figure}
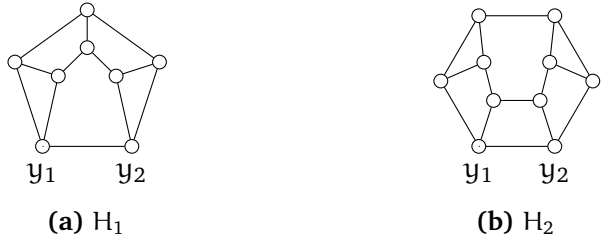

	Next we prove Theorem~\ref{th:noformula}.  
	
	\begin{proof}[Proof of Theorem~\ref{th:noformula}]
		By definition of strong product, we have 
		\begin{align*}
			R_{(x_1, y_1)}((x_1, y_1), (x_2, y_2)) & = 
			(R_{x_1}(x_1, x_2)\times N_H[y_1])\cup (N_G[x_1]\times R_{y_1}(y_1, y_2)),
			\\
			R_{(x_2, y_2)}((x_1, y_1), (x_2, y_2)) & = 
			(R_{x_2}(x_1, x_2)\times N_H[y_2])\cup (N_G[x_2]\times R_{y_2}(y_1, y_2)).   
		\end{align*}
		Let~$\phi_1\in \mathcal{O}_{x_1x_2}$, and~$\phi_2\in \mathcal{O}_{y_1y_2}$. Define $\phi_3\in \mathcal{A}_{(x_1, y_1)(x_2, y_2)}$ as 
		\begin{align*}
			\phi_3(x, y) = \begin{cases}
				(x, \phi_2(y)) & \text{if $(x, y)\in (N_G[x_1]\setminus R_{x_1}(x_1, x_2))\times R_{y_1}(y_1, y_2)$,}\\
				(\phi_1(x), y) & \text{if $(x, y)\in R_{x_1}(x_1, x_2)\times (N_H[y_1]\setminus R_{y_1}(y_1, y_2))$,} \\
				(\phi_1(x), \phi_2(y))  & \text{if $(x, y)\in R_{x_1}(x_1, x_2)\times R_{y_1}(y_1, y_2)$.}
			\end{cases}
		\end{align*}
		By Proposition \ref{pro:distancestrong}, it is easy to get 
		\begin{align*}
			& \sum_{z\in R_{(x_1, y_1)}((x_1, y_1), (x_2, y_2))}d(z, \phi_3(z)) \\
			& \leq \Big((d_G + 1)\sum_{z\in R_{x_1}(x_1, x_2)}d(z, \phi_1(z)) + (d_H + 1)\sum_{z\in R_{y_1}(y_1, y_2)}d(z, \phi_2(z))\Big) - |R_{x_1}(x_1, x_2)||R_{y_1}(y_1, y_2)|
			\\
			& = (d_G + 1)\text{OPT}_H + (d_H + 1)\text{OPT}_G - |R_{x_1}(x_1, x_2)||R_{y_1}(y_1, y_2)|,	
		\end{align*}
		where the first inequality holds since for each $(x, y)\in R_{x_1}(x_1, x_2)\times R_{y_1}(y_1, y_2)$, 
		\begin{align*}
			d((x, y), \phi_3((x, y))) & = d((x, y), (\phi_1(x), \phi_2(y))) \\
			& = \max\left\{d(x, \phi_1(x)), d(y, \phi_2(y)) \right\} \\
			& \leq d(x, \phi_1(x)) + d(y, \phi_2(y)) - 1. 
		\end{align*}
		Thus, we have
		\begin{align}
			\text{OPT}_{G\boxtimes H} & \leq \sum_{z\in R_{(x_1, y_1)}((x_1, y_1), (x_2, y_2))}d(z, \phi_3(z))
			\nonumber \\
			& \leq (d_G + 1)\text{OPT}_H  + (d_H + 1)\text{OPT}_G - |R_{x_1}(x_1, x_2)||R_{y_1}(y_1, y_2)|. 
			\label{ine:upper}
		\end{align}
		Applying equation~(\ref{eq:LLY}) on $G$ and~$H$, we get
		\begin{equation}
			\begin{aligned}
				\text{OPT}_H & = d_H - d_H\kappa_{\rm{LLY}}(y_1, y_2) + 1,
				\\
				\text{OPT}_G & = d_G - d_G\kappa_{\rm{LLY}}(x_1, x_2) + 1.
				\label{eq:GH}
			\end{aligned}
		\end{equation}
		Combining equations (\ref{eq:LLY}), (\ref{eq:GH}) and inequality (\ref{ine:upper}) finishes the proof. 	
	\end{proof}

    Note that from the proof of Theorem \ref{th:noformula}, the lower bound can be attained if $\text{OPT}_G = |R_{x_1}(x_1, x_2)|$, and $\text{OPT}_H = |R_{y_1}(y_1, y_2)|$. For example, let $G = H = C_4$.

	\section{Ricci curvature of Cartesian product} \label{sec:Cartesian product}
	
	The next proposition follows from the definition of Cartesian product graphs.
	
	\begin{proposition}\label{pro:distancecartesian}
		Let~$G$ and~$H$ be two connected graphs. For any two vertices~$(x_1, y_1)$ and~$(x_2, y_2)$ in~$G\Box H$,
		\[
		d((x_1, y_1), (x_2, y_2)) = d(x_1, x_2) + d(y_1, y_2). 
		\]
	\end{proposition}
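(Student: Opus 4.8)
The plan is to establish the identity by proving the two inequalities
\[
d((x_1, y_1), (x_2, y_2)) \leq d(x_1, x_2) + d(y_1, y_2)
\quad\text{and}\quad
d((x_1, y_1), (x_2, y_2)) \geq d(x_1, x_2) + d(y_1, y_2),
\]
following the template of Proposition~\ref{pro:distancestrong} but replacing the maximum by a sum. The conceptual reason for the change is that in~$G\Box H$ every edge alters \emph{exactly one} coordinate (adjacency requires $x_1=x_2$ or $y_1=y_2$), so a path cannot advance both projections simultaneously, whereas in~$G\boxtimes H$ a diagonal edge can.

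For the upper bound I would exhibit an explicit path. Let $u_0u_1\ldots u_s$ be a shortest path in~$G$ with $u_0=x_1$ and $u_s=x_2$, and let $v_0v_1\ldots v_t$ be a shortest path in~$H$ with $v_0=y_1$ and $v_t=y_2$. Then
\[
(u_0, v_0)(u_1, v_0)\ldots (u_s, v_0)(u_s, v_1)\ldots (u_s, v_t)
\]
is a walk in~$G\Box H$: each consecutive pair differs in exactly one coordinate by a single edge, hence is adjacent by condition~(i) or~(ii). Its length is $s+t = d(x_1,x_2)+d(y_1,y_2)$, which gives the upper bound.

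For the lower bound I would take any shortest path $P$ from $(x_1,y_1)$ to $(x_2,y_2)$ in $G\Box H$ and partition its edges into those that move the $G$-coordinate and those that move the $H$-coordinate; every edge falls into exactly one class. Reading the $G$-moving edges in order (and contracting the repeated vertices introduced by the $H$-moves) yields a walk in~$G$ from $x_1$ to $x_2$, so there are at least $d(x_1,x_2)$ such edges; symmetrically there are at least $d(y_1,y_2)$ edges moving the $H$-coordinate. Summing the two counts gives $|P|\geq d(x_1,x_2)+d(y_1,y_2)$, and combining with the upper bound yields equality.

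The proof is essentially bookkeeping and I do not expect a genuine obstacle. The one point that must be stated cleanly is the lower-bound step: one has to verify that the subsequence of $G$-moving edges of~$P$ really traces a connected walk in~$G$ from $x_1$ to $x_2$ (and symmetrically for~$H$), which is exactly where the Cartesian property ``each edge changes one coordinate'' is used. Making this projection-and-contraction argument precise, rather than any analytic difficulty, is the main thing to get right.
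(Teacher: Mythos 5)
Your proposal is correct and follows essentially the same route as the paper: the same explicit staircase path for the upper bound, and for the lower bound your count of $G$-moving versus $H$-moving edges is just the projection argument the paper uses (the number of $G$-moving edges of $P$ is exactly the length of the projection $P_G$). No further comment is needed.
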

	\begin{proof}
		Let~$P_G$ be a shortest path between~$x_1$ and~$x_2$ in~$G$, and~$P_H$ be a shortest path between~$y_1$ and~$y_2$ in~$H$. 
		By definition~$(P_G\times \{y_1\})\cup (\{x_2\}\times P_H)$ is a path of length~$d(x_1, x_2) + d(y_1, y_2)$ between~$(x_1, y_1)$ and~$(x_2, y_2)$ in~$G\Box H$. Thus, we have 
		\begin{align}
			d((x_1, y_1), (x_2, y_2)) \leq d(x_1, x_2) + d(y_1, y_2).
			\label{ineq:discarleq}
		\end{align}
		
		On the other hand, let~$Q$ be a shortest path between~$(x_1, y_1)$ and~$(x_2, y_2)$ in~$G\Box H$. 
		By definition the projection of~$Q$ on~$G$, denoted as $Q_G$, is a path between~$x_1$ and $x_2$ in~$G$, and the projection of~$Q$ on~$H$, denoted as $Q_H$, is a path between~$y_1$ and~$y_2$ in~$H$. 
		Thus, we have 
		\begin{align}
			d((x_1, y_1), (x_2, y_2)) \geq |Q_G| + |Q_H| \geq d(x_1, x_2) + d(y_1, y_2), 
			\label{ineq:discargeq}    
		\end{align}
		where~$|Q_G|$ and~$|Q_H|$ denote the lengths of paths~$Q_G$ and~$Q_H$, respectively.
		Combining inequalities (\ref{ineq:discarleq}) and (\ref{ineq:discargeq}) closes the proof. 
	\end{proof}
	
	The next two lemmas establish the relations between ~$\text{OPT}_{G\Box H}$ and~$\text{OPT}_ H$, and between~$\text{MAX}_H$ and~$\text{MAX}_{G\Box H}$, respectively. 
	
	\begin{lemma} \label{le:infcartesian}
		Let~$G$ and~$H$ be two regular graphs with vertex degrees~$d_G$ and~$d_H$, respectively. 
		Let $x_1 = x_2\in V(G)$, and $y_1y_2\in E(H)$. Then
		\[
		\text{OPT}_{G\Box H} = \text{OPT}_H + d_G.
		\]
	\end{lemma}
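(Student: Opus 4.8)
The plan is to compute the two residual sets explicitly, produce a product assignment for the ``$\le$'' direction, and then establish the matching lower bound by an accounting argument that classifies every matched pair according to which coordinate direction it travels in. Throughout I abbreviate the horizontal edge as $(x, y_1)(x, y_2)$ with $x = x_1 = x_2$, and I write $\bigtriangleup(y_1, y_2) = N_H(y_1) \cap N_H(y_2)$.

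First I would record the structure of the residual sets. Unwinding the definition of the Cartesian product, the only common neighbours of $(x,y_1)$ and $(x,y_2)$ are $\{x\} \times \bigtriangleup(y_1,y_2)$, and so
\[
R_{(x, y_1)}((x, y_1), (x, y_2)) = \big(N_G(x) \times \{y_1\}\big) \cup \big(\{x\} \times R_{y_1}(y_1, y_2)\big),
\]
with the symmetric description for $(x,y_2)$. Thus the domain splits into a ``$G$-part'' $A = N_G(x) \times \{y_1\}$ of size $d_G$ and an ``$H$-part'' $B = \{x\} \times R_{y_1}(y_1, y_2)$ of size $r = |R_{y_1}(y_1, y_2)|$, and the codomain splits analogously into $A'$ and $B'$. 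By Proposition~\ref{pro:distancecartesian} every distance in $G\Box H$ separates as $d_G(\cdot,\cdot) + d_H(\cdot,\cdot)$, which is exactly what decouples the two directions. For the upper bound I would use the assignment sending $(x', y_1) \mapsto (x', y_2)$ on $A$ (distance $1$ each, total $d_G$) and $(x, y') \mapsto (x, \psi(y'))$ on $B$, where $\psi$ realizes $\text{OPT}_H$ (total $\text{OPT}_H$); this gives $\text{OPT}_{G\Box H} \le \text{OPT}_H + d_G$.

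The main work—and the step I expect to be the obstacle—is the matching lower bound. Given an arbitrary assignment $\phi$, let $a$ denote the number of elements of $A$ sent into $A'$; then exactly $d_G - a$ elements of $A$ land in $B'$ and, by bijectivity, exactly $d_G - a$ elements of $B$ land in $A'$, leaving $r - (d_G - a) \ge 0$ pairs matched within $B \to B'$. The distances I would use are: a pair $A \to A'$ costs $d_G(x', x'') + 1 \ge 1$; each cross pair $A \to B'$ or $B \to A'$ costs exactly $3$, because any $y'' \in R_{y_2}(y_1, y_2)$ is adjacent to $y_2$ but, not lying in $\bigtriangleup(y_1, y_2)$, is not adjacent to $y_1$, whence $d_H(y_1, y'') = 2$ (and symmetrically $d_H(y', y_2) = 2$ for $y' \in R_{y_1}(y_1, y_2)$); finally the $B \to B'$ pairs form a partial assignment between $R_{y_1}(y_1, y_2)$ and $R_{y_2}(y_1, y_2)$, and since any two such vertices lie at distance at most $3$, completing it to a full assignment adds at most $3(d_G - a)$, so its cost is at least $\text{OPT}_H - 3(d_G - a)$. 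Summing the four contributions yields a total of at least
\[
a + 3(d_G - a) + 3(d_G - a) + \big(\text{OPT}_H - 3(d_G - a)\big) = 3d_G - 2a + \text{OPT}_H,
\]
and since $a \le d_G$ this is at least $d_G + \text{OPT}_H$. Combined with the upper bound, this proves the claimed equality.

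The delicate points to pin down carefully are the exact value $3$ of the cross pairs, which rests on the distance-$2$ computation driven by membership in $R_{y_1}$ and $R_{y_2}$, and the completion argument that bounds the partial $B \to B'$ matching below by $\text{OPT}_H - 3(d_G-a)$ rather than by $\text{OPT}_H$ directly; the final inequality then only survives because the expensive cross pairs (weight $3$) precisely overwhelm the savings $a$ from same-direction pairs once one uses $a \le d_G$. If desired, one could instead phrase the whole argument by transporting $\phi$ to a genuine assignment on $H$ and charging the discrepancy, but the direct four-case count above seems the most transparent.
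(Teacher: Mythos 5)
Your proof is correct, but it takes a genuinely different route from the paper's. The paper proves the lower bound by a local exchange argument: starting from an optimal assignment $\phi_1$, it swaps any cross pair $(x',y_1)\mapsto(x_1,y')$ with a compensating pair $(x_1,y'')\mapsto(x'',y_2)$, checks via the $6$-versus-$\le 6$ computation that optimality is preserved, and thereby produces an optimal assignment that maps $N_G(x)\times\{y_1\}$ onto $N_G(x)\times\{y_2\}$; the value $d_G+\text{OPT}_H$ is then read off directly. You instead bound an \emph{arbitrary} assignment from below by classifying its pairs into the four types $A\to A'$ (cost $\ge 1$), $A\to B'$ and $B\to A'$ (cost exactly $3$ each, via the distance-$2$ computation for vertices in $R_{y_1}$ and $R_{y_2}$), and $B\to B'$ (a partial assignment whose cost you correctly bound below by $\text{OPT}_H - 3(d_G-a)$ via completion), arriving at $3d_G - 2a + \text{OPT}_H \ge d_G + \text{OPT}_H$. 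All the numerical ingredients check out, including the count $d_G - a$ of cross pairs in each direction and the final cancellation. Your accounting argument is arguably cleaner for this lemma, since it avoids the paper's somewhat informal ``without loss of generality there is only one misplaced vertex'' step, which really requires an iteration. What the paper's approach buys in exchange is the structural fact that there \emph{exists} an optimal assignment respecting the fiber decomposition; that fact is reused in the subsequent lemma on $\text{MAX}_{G\Box H}$, whereas your purely quantitative lower bound would not suffice there without further work.
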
	
	\begin{proof}
		By definition of Cartesian product, we have 
		\begin{align*}
			R_{(x_1, y_1)}((x_1, y_1), (x_1, y_2)) & = (N_G(x_1)\times \{y_1\})\cup (\{x_1\}\times R_{y_1}(y_1, y_2)),
			\\
			R_{(x_1, y_2)}((x_1, y_1), (x_1, y_2)) & = (N_G(x_1)\times \{y_2\})\cup (\{x_1\}\times R_{y_2}(y_1, y_2)).
		\end{align*}
		Let~$\phi_1\in \mathcal{O}_{(x_1, y_1)(x_1, y_2)}$, and~$\phi_2\in \mathcal{O}_{y_1y_2}$. 
		We claim that there exists some~$\phi'_1\in \mathcal{O}_{(x_1, y_1)(x_1, y_2)}$ 
		such that for any~$(x, y_1)\in (N_G(x_1)\times \{y_1\})$, $\phi'_1((x, y_1)) = (x, y_2)$. 
		To prove the claim, due to the optimality of~$\phi'_1$, 
		it suffices to prove that there exists some~$\phi'_1\in \mathcal{O}_{(x_1, y_1)(x_1, y_2)}$ such that~$\phi'_1(N_G(x_1)\times \{y_1\}) = N_G(x_1)\times \{y_2\}$.
		If~$\phi_1$ satisfies this property, then let~$\phi'_1 = \phi_1$ and we are done.
		Suppose that~$\phi_1$ does not.  
		Without loss of generality, assume that there exists only one vertex~$(x', y_1)\in (N_G(x_1)\times \{y_1\})$ such that~$\phi_1((x', y_1))\notin (N_G(x_1)\times \{y_2\})$, i.e., $\phi_1((x', y_1))\in (\{x_1\}\times R_{y_2}(y_1, y_2))$.
		Let~$\phi_1((x', y_1)) = (x_1, y')$. 
		Since~$|N_G(x_1)\times \{y_1\}| = |N_G(x_1)\times \{y_2\}|$. 
		There must exist some~$(x_1, y'')\in (\{x_1\}\times R_{y_1}(y_1, y_2))$ such that~$\phi_1((x_1, y'')) \in (N_G(x_1)\times \{y_2\})$. Let~$\phi_1((x_1, y'')) = (x'', y_2)$. 
		Define~$\phi'_1\in \mathcal{A}_{(x_1, y_1)(x_1, y_2)}$ as
		\begin{align*}
			\phi'_1(z) =
			\begin{cases}
				(x'', y_2) & \text{if $z = (x', y_1)$,}\\
				(x_1, y') & \text{if $z = (x_1, y'')$,} \\
				\phi_1(z)  & \text{otherwise.}
			\end{cases}
		\end{align*}
		By Proposition~\ref{pro:distancecartesian}, 
		we have
		\begin{align}
			& d((x', y_1), \phi_1((x', y_1))) + d((x_1, y''), \phi_1((x_1, y''))) \nonumber \\
			& = d((x', y_1), (x_1, y')) + d((x_1, y''), (x'', y_2)) \nonumber \\
			& = d(x', x_1) + d(y_1, y') + d(x_1, x'') + d(y'', y_2) \nonumber \\
			& = d(y_1, y') + d(y'', y_2) + 2 \nonumber \\
			& = 6.
			\label{eq:6}
		\end{align}
		Here the third equality holds since~$d(x', x_1) = d(x_1, x'') = 1$ ($x', x'' \in N_G(x_1)$), the last equality holds since~$d(y_1, y') = 2$ ($y'\in R_{y_2}(y_1, y_2)$) and $d(y'', y_2) = 2$ ($y''\in R_{y_1}(y_1, y_2)$).
		For~$\phi'_1$, by Proposition~\ref{pro:distancecartesian} we have
		\begin{align}
			& d((x', y_1), \phi'_1((x', y_1))) + d((x_1, y''), \phi'_1((x_1, y''))) \nonumber \\
			& = d((x', y_1), (x'', y_2)) + d((x_1, y''), (x_1, y')) \nonumber \\
			& = d(x', x'') + d(y_1, y_2) + d(y'', y') \nonumber \\
			& = d(x', x'') + 1 + d(y'', y')  \nonumber \\
			& \leq 6,
			\label{ine:leq6}
		\end{align}
		where the last inequality holds since~$d(x', x'')\leq 2$ ($x', x''\in N_G(x_1)$) and~$d(y'', y')\leq 3$ ($y''\in N_H(y_1), y'\in N_H(y_2)$).
		By equation (\ref{eq:6}) and inequality (\ref{ine:leq6}), 
		we have 
		\[
		\sum_{z\in R_{(x_1, y_1)}((x_1, y_1), (x_1, y_2))}d(z, \phi'_1(z))\leq \sum_{z\in R_{(x_1, y_1)}((x_1, y_1), (x_1, y_2))}d(z, \phi_1(z)),
		\]
		which means that $\phi'_1\in \mathcal{O}_{(x_1, y_1)(x_1, y_2)}$. 
		The claim follows from that $\phi'_1(N_G(x_1)\times \{y_1\}) = N_G(x_1)\times \{y_2\}$. 
		
		Now we consider~$\phi'_1$ and~$\phi_2$.
		Since for any~$(x, y_1)\in (N_G(x_1)\times \{y_1\})$, $\phi'_1((x, y_1)) = (x, y_2)$, then
		\begin{align}
			\sum_{(x, y_1)\in (N_G(x_1)\times \{y_1\})}d((x, y_1), \phi'_1((x, y_1))) = \sum_{(x, y_1)\in (N_G(x_1)\times \{y_1\})}1 = d_G. 
			\label{eq:firstpart} 
		\end{align}
		Since~$\phi'_1(\{x_1\}\times R_{y_1}(y_1, y_2)) = \{x_1\}\times R_{y_2}(y_1, y_2)$, due to the optimality of~$\phi'_1$, we have
		\begin{align}
			\sum_{z\in (\{x_1\}\times R_{y_1}(y_1, y_2))}d(z, \phi'_1(z)) = \sum_{z\in R_{y_1}(y_1, y_2)}d(z, \phi_2(z)). 
			\label{eq:secondpart}
		\end{align} 
		Combining equations (\ref{eq:firstpart}) and (\ref{eq:secondpart}) yields
		\begin{align*}
			& \sum_{z\in R_{(x_1, y_1)}((x_1, y_1), (x_1, y_2))}d(z, \phi'_1(z)) \\
			&  = \sum_{z\in (N_G(x_1)\times \{y_1\})}d(z, \phi'_1(z)) + \sum_{z\in (\{x_1\}\times R_{y_1}(y_1, y_2))}d(z, \phi'_1(z)) \\
			& = d_G + \sum_{z\in R_{y_1}(y_1, y_2)}d(z, \phi_2(z)). 
		\end{align*}
		This concludes the proof.
	\end{proof}
	
	\begin{lemma} \label{le:supcartesian}
		Let~$G$ and~$H$ be two regular graphs with vertex degrees~$d_G$ and~$d_H$, respectively. 
		Let $x_1 = x_2\in V(G)$, and $y_1y_2\in E(H)$. If~$R_{y_1}(y_1, y_2)\neq \emptyset$, then 
		\[
		\text{MAX}_{G\Box H} = \text{MAX}_H.
		\]
	\end{lemma}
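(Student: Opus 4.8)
The plan is to prove the two inequalities $\text{MAX}_{G\Box H}\ge\text{MAX}_H$ and $\text{MAX}_{G\Box H}\le\text{MAX}_H$ separately, in the same spirit as Lemma~\ref{le:supstrong}. The engine of both directions will be a structural claim stronger than what Lemma~\ref{le:infcartesian} records: \emph{every} optimal assignment $\phi\in\mathcal{O}_{(x_1,y_1)(x_1,y_2)}$ maps the horizontal part $N_G(x_1)\times\{y_1\}$ onto $N_G(x_1)\times\{y_2\}$ by the identity on the $G$-coordinate (so every horizontal pair has distance $1$), and restricts to an optimal $H$-assignment on $\{x_1\}\times R_{y_1}(y_1,y_2)$. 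I will use throughout the splitting of the source and target sets recorded at the start of the proof of Lemma~\ref{le:infcartesian} and the value $\text{OPT}_{G\Box H}=\text{OPT}_H+d_G$ proved there.

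First I establish the structural claim by ruling out ``crossings''. Given an optimal $\phi$, let $k$ be the number of horizontal sources $(x,y_1)$ that $\phi$ sends into the vertical target $\{x_1\}\times R_{y_2}(y_1,y_2)$; counting the two fibers of the target forces exactly $k$ vertical sources to be sent into the horizontal target $N_G(x_1)\times\{y_2\}$. By Proposition~\ref{pro:distancecartesian} each crossed pair has distance $3$ and each horizontal-to-horizontal pair has distance at least $1$, so the total cost of $\phi$ is at least $(d_G-k)+6k+V$, where $V$ denotes the cost of the vertical-to-vertical pairs. On the other hand, rerouting each crossed vertical source directly to a crossed vertical target (a reroute of cost at most $3$, since any $y''\in R_{y_1}(y_1,y_2)$ and $y'\in R_{y_2}(y_1,y_2)$ satisfy $d(y'',y')\le3$) produces a genuine assignment in $\mathcal{A}_{y_1y_2}$ of cost at most $V+3k$, hence at least $\text{OPT}_H$; thus $V\ge\text{OPT}_H-3k$. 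Combining the two estimates with $\text{OPT}_{G\Box H}=\text{OPT}_H+d_G$ gives $d_G+\text{OPT}_H\ge d_G+2k+\text{OPT}_H$, so $k=0$.

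With crossings excluded, $\phi$ maps the horizontal fiber into the horizontal fiber and the vertical fiber into the vertical fiber. Since the horizontal-to-horizontal cost is at least $d_G$ and the vertical-to-vertical cost is at least $\text{OPT}_H$, while their sum equals $d_G+\text{OPT}_H$, both bounds are tight: tightness of the horizontal cost forces every horizontal pair to have distance exactly $1$, i.e.\ the identity on the $G$-coordinate, and tightness of the vertical cost forces the vertical restriction to lie in $\mathcal{O}_{y_1y_2}$. This proves the structural claim.

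The two inequalities then follow quickly. For $\ge$, I take $\phi_2\in\mathcal{O}_{y_1y_2}$ realizing $\text{MAX}_H$, and lift it to the assignment that is the identity on the horizontal fiber and $\phi_2$ on the vertical fiber; this lift is optimal, and by Proposition~\ref{pro:distancecartesian} its vertical pairs have the same distances as under $\phi_2$, so its supremal distance is at least $\text{MAX}_H$. For $\le$, I take any optimal $\phi$ and a maximizer $z'$ of $d(z',\phi(z'))$; by the structural claim $\phi$ is of the nice form, so either $z'$ is horizontal with $d(z',\phi(z'))=1$, or $z'=(x_1,y')$ is vertical with $d(z',\phi(z'))=d(y',\phi_2(y'))\le\text{MAX}_H$ for the optimal $H$-assignment $\phi_2$ to which $\phi$ restricts. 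Here the hypothesis $R_{y_1}(y_1,y_2)\ne\emptyset$ enters: it guarantees at least one vertical pair, whence $\text{MAX}_H\ge1$, so the horizontal value $1$ never exceeds $\text{MAX}_H$. The main obstacle is exactly the structural claim---ruling out that an optimal assignment diverts a horizontal source to a vertical target---since such a crossing, carrying distance $3$, is precisely what could otherwise push $\text{MAX}_{G\Box H}$ above $\text{MAX}_H$.
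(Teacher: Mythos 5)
Your proof is correct, but the key step is handled by a genuinely different argument from the paper's. The paper only establishes the weaker fact that \emph{some} optimal assignment of the required supremal distance maps $N_G(x_1)\times\{y_1\}$ onto $N_G(x_1)\times\{y_2\}$: it takes an arbitrary optimal $\phi_1$, performs a local swap on a single ``crossing'' (a horizontal source sent to a vertical target, paired with a vertical source sent to a horizontal target), and checks that the swap preserves both optimality and the supremum, because a crossing forces distance $3$ on both sides of the swap. You instead prove the stronger structural statement that \emph{every} optimal assignment is crossing-free, via a global cost account: with $k$ crossings the total cost is at least $d_G+5k+V$, while rerouting the $k$ displaced vertical sources to the $k$ displaced vertical targets (each reroute costing at most $3$) shows $V\ge\mathrm{OPT}_H-3k$, so the total is at least $\mathrm{OPT}_{G\Box H}+2k$ and $k=0$; tightness then forces the identity on the $G$-coordinate and an optimal $H$-restriction. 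Your counting is right (the $k$ crossed pairs on each side do have distance exactly $3$ by Proposition~\ref{pro:distancecartesian}, the reroute bound $d(y'',y')\le 3$ holds, and $R_{y_1}(y_1,y_2)\cap R_{y_2}(y_1,y_2)=\emptyset$ gives $\mathrm{MAX}_H\ge 1$ so the horizontal distance $1$ never dominates). What your route buys is a cleaner derivation: both inequalities become immediate consequences of the structural claim, and you avoid the paper's ``without loss of generality assume only one crossing'' reduction, which in the paper is left implicit and would otherwise need to be iterated. The cost is that you rely more heavily on the exact value $\mathrm{OPT}_{G\Box H}=\mathrm{OPT}_H+d_G$ from Lemma~\ref{le:infcartesian}, but that is already proved there, so nothing is circular.
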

	\begin{proof}
		Let~$\phi_1\in \mathcal{O}_{(x_1, y_1)(x_1, y_2)}$ such that
		\[
		\sup_{z\in R_{(x_1, y_1)}((x_1, y_1), (x_1, y_2))}d(z, \phi_1(z))
		\]
		reaches the maximum over all optimal assignments~$\phi\in \mathcal{O}_{(x_1, y_1)(x_1, y_2)}$.
		Let~$\phi_2\in \mathcal{O}_{y_1y_2}$ such that 
		\[
		\sup_{z\in R_{y_1}(y_1, y_2)}d(z, \phi_2(z))
		\]
		reaches the maximum over all optimal assignments~$\phi\in \mathcal{O}_{y_1y_2}$.
		
		On the one hand, by Lemma~\ref{le:infcartesian}, one can construct an optimal assignment~$\phi'_1\in \mathcal{O}_{(x_1, y_1)(x_1, y_2)}$ as follows:
		\begin{align*}
			\phi'_1((x, y)) =
			\begin{cases}
				(x, y_2) & \text{if $(x, y)\in (N_G(x_1)\times \{y_1\})$}, \\
				(x, \phi_2(y)) & \text{if $(x, y)\in (\{x_1\}\times R_{y_1}(y_1, y_2))$}. 
			\end{cases}
		\end{align*}
		By Proposition~\ref{pro:distancecartesian}, for each $(x, y)\in (\{x_1\}\times R_{y_1}(y_1, y_2))$,
		\begin{align*}
			d((x, y), \phi'_1((x, y))) = d((x, y), (x, \phi_2(y))) = d(y, \phi_2(y)).
		\end{align*}
		It follows that 
		\begin{align}
			\sup_{z\in R_{(x_1, y_1)}((x_1, y_1), (x_1, y_2))}d(z, \phi_1(z)) & \geq  \sup_{z\in R_{(x_1, y_1)}((x_1, y_1), (x_1, y_2))}d(z, \phi'_1(z)) \nonumber \\
			& = \sup_{z\in R_{y_1}(y_1, y_2)}d(z, \phi_2(z)).
			\label{ine:supgeqcar}
		\end{align}
		
		On the other hand, we claim that there exists some $\phi'_1\in \mathcal{O}_{(x_1, y_1)(x_1, y_2)}$ such that $\phi'_1(N_G(x_1)\times \{y_1\}) = N_G(x_1)\times \{y_2\}$, and such that 
		\[
		\sup_{z\in R_{(x_1, y_1)}((x_1, y_1), (x_1, y_2))}d(z, \phi_1(z)) = \sup_{z\in R_{(x_1, y_1)}((x_1, y_1), (x_1, y_2))}d(z, \phi'_1(z)).
		\] 
		If~$\phi_1$ satisfies this property, then let~$\phi'_1 = \phi_1$ and we are done. 
		Suppose that~$\phi_1$ does not. 
		Without loss of generality, assume that there exists only one vertex~$(x', y_1)\in (N_G(x_1)\times \{y_1\})$ such that~$\phi_1((x', y_1))\notin (N_G(x_1)\times \{y_2\})$, i.e., $\phi_1((x', y_1))\in (\{x_1\}\times R_{y_2}(y_1, y_2))$.  
		Let~$\phi_1((x', y_1)) = (x_1, y')$. 
		Since~$|N_G(x_1)\times \{y_1\}| = |N_G(x_1)\times \{y_2\}|$. 
		There must exist some~$(x_1, y'')\in (\{x_1\}\times R_{y_1}(y_1, y_2))$ such that~$\phi_1((x_1, y'')) \in (N_G(x_1)\times \{y_2\})$. Let~$\phi_1((x_1, y'')) = (x'', y_2)$.
		By Proposition~\ref{pro:distancecartesian}, it is easy to get 
		\begin{align*}
			\sup_{z\in R_{(x_1, y_1)}((x_1, y_1), (x_1, y_2))}d(z, \phi_1(z)) & = d((x', y_1), \phi_1((x', y_1))) \\ 
			& = d((x_1, y''), \phi_1((x_1, y''))) \\
			& = 3.
		\end{align*}
		Define 
		\begin{align*}
			\phi'_1(z) =
			\begin{cases}
				(x'', y_2) & \text{if $z = (x', y_1)$,}\\
				(x_1, y') & \text{if $z = (x_1, y'')$,} \\
				\phi_1(z)  & \text{otherwise.}
			\end{cases}
		\end{align*}
		We have proved in Lemma~\ref{le:supcartesian} that~$\phi'_1\in \mathcal{O}_{(x_1, y_1)(x_1, y_2)}$.   
		Since~$\phi_1, \phi'_1\in \mathcal{O}_{(x_1, y_1)(x_1, y_2)}$, and 
		\begin{align*}
			d((x', y_1), \phi_1((x', y_1)) + d((x_1, y''), \phi_1((x_1, y'')) = 3 +3 = 6.
		\end{align*}
		By Proposition~\ref{pro:distancecartesian}, we have 
		\begin{align*}
			& d((x', y_1), \phi'_1((x', y_1))) + d((x_1, y''), \phi'_1((x_1, y''))) \nonumber \\
			& = d(x', x'') + 1 + d(y'', y')  \nonumber \\
			& = 6.
		\end{align*}
		Since~$d(x', x'')\leq 2$ and~$d(y'', y')\leq 3$. It follows from the above equation that~$d(x', x'') = 2$, and~$d(y'', y') = 3$. By Proposition~\ref{pro:distancecartesian},
		\[
		d((x_1, y''), \phi'_1((x_1, y''))) = d((x_1, y''), (x_1, y')) = d(y'', y') = 3. 
		\]
		Since 
		\[
		\sup_{z\in R_{(x_1, y_1)}((x_1, y_1), (x_1, y_2))}d(z, \phi'_1(z)) = \sup_{z\in R_{(x_1, y_1)}((x_1, y_1), (x_1, y_2))}d(z, \phi_1(z)) = 3
		\]
		and $\phi'_1(N_G(x_1)\times \{y_1\}) = N_G(x_1)\times \{y_2\}$, the claim holds true.  
		
		Now we consider~$\phi'_1$ and~$\phi_2$. 
		Since~$R_{y_1}(y_1, y_2)\neq \emptyset$, and
		for any~$z\in (N_G(x_1)\times \{y_1\})$, $d(z, \phi'_1(z)) = 1$. We have 
		\[
		\sup_{z\in R_{(x_1, y_1)}((x_1, y_1), (x_1, y_2))}d(z, \phi'_1(z)) = \sup_{z\in (\{x_1\}\times R_{y_1}(y_1, y_2))}d(z, \phi'_1(z)). 
		\]
		Due to the optimality of~$\phi'_1$, the assignment~$\phi'_2\in \mathcal{A}_{y_1y_2}$ defined as~$\phi'_2(y) = y'$ if~$\phi'_1(x_1, y) = (x_1, y')$, is optimal. 
		Since for any $y\in R_{y_1}(y_1, y_2)$,
		\[
		d(y, \phi'_2(y)) = d((x_1, y), \phi'_1((x_1, y))). 
		\] 
		It follows that 
		\begin{align}
			\sup_{z\in R_{y_1}(y_1, y_2)}d(z, \phi_2(z)) & \geq \sup_{z\in R_{y_1}(y_1, y_2)}d(z, \phi'_2(z)) \nonumber \\
			& = \sup_{z\in R_{(x_1, y_1)}((x_1, y_1), (x_1, y_2))}d(z, \phi'_1(z)). 
			\label{ine:supleqcar}
		\end{align}
		Combining inequalities (\ref{ine:supgeqcar}) and (\ref{ine:supleqcar}) yields~$\text{MAX}_{G\Box H} = \text{MAX}_H$.  
	\end{proof} 
	
	We are now in a position to prove Theorem~\ref{th:cartesian}.
	
	\begin{proof}[Proof of Theorem~\ref{th:cartesian}]
		We consider the following two cases. 
		
		Case 1. $R_{y_1}(y_1, y_2) = \emptyset$. By definition
		\begin{align*}
			R_{(x, y_1)}((x, y_1), (x, y_2)) & = N_G(x)\times \{y_1\},
			\\
			R_{(x, y_2)}((x, y_1), (x, y_2)) & = N_G(x)\times \{y_2\}.
		\end{align*}
		It is easy to see that~$\text{OPT}_{G\Box H} = d_G$, and $\text{MAX}_{G\Box H} = 1$. 
		In addition, $|\bigtriangleup(y_1, y_2)| = d_H - 1$, and $\text{OPT}_H = 0$. 	
		By equations (\ref{eq:LLY}) and (\ref{eq:k02}), we have 
		\[
		\kappa_{\rm{LLY}}(y_1, y_2) = \frac{d_H + 1}{d_H},\quad \kappa_0(y_1, y_2) = \frac{d_H - 1}{d_H}.
		\] 	
		By equation (\ref{eq:LLY}),
		\begin{align*}
			\kappa_{\rm{LLY}}((x, y_1), (x, y_2)) & = \frac{1}{d_{G\Box H}}(d_{G\Box H} + 1 - d_G) \\
			& = \frac{d_H + 1}{d_{G\Box H}} \\
			& = \frac{d_H}{d_{G\Box H}}\kappa_{\rm{LLY}}(y_1, y_2).
		\end{align*}
		By equation (\ref{eq:k01}), 
		\begin{align*}
			\kappa_0((x, y_1), (x, y_2)) & = \frac{d_H + 1}{d_{G\Box H}} - \frac{1}{d_{G\Box H}}(3 - 1) \\
			& = \frac{d_H - 1}{d_{G\Box H}} \\
			& = \frac{d_H}{d_{G\Box H}}\kappa_0(y_1, y_2).
		\end{align*}
		Note that $|\bigtriangleup((x, y_1), (x, y_2))| < d_{G\Box H} - 1$ always holds in $G\Box H$. 
		
		Case 2. $R_{y_1}(y_1, y_2) \neq \emptyset$.  
		By Lemma \ref{le:infcartesian}, we have 
		\begin{align}
			\text{OPT}_{G\Box H} = \text{OPT}_H + d_G. 
			\label{eq:XGHcar}
		\end{align}
		Applying equation~(\ref{eq:LLY}) on~$H$, we get
		\begin{align}
			\text{OPT}_H = d_H - d_H\kappa_{\rm{LLY}}(y_1, y_2) + 1. 
			\label{eq:XHcar}
		\end{align}
		Combine equations~(\ref{eq:LLY}),~(\ref{eq:XGHcar}) and~(\ref{eq:XHcar}), 
		\begin{align}
			\kappa_{\rm{LLY}}((x, y_1), (x, y_2)) & = \frac{1}{d_{G\Box H}}(d_{G\Box H} + 1 - \text{OPT}_{G\Box H}) \nonumber \\
			& = \frac{d_H}{d_{G\Box H}}\kappa_{\rm{LLY}}(y_1, y_2). 
			\label{eq:LLYcar} 
		\end{align}
		By Lemma~\ref{le:supcartesian}, we have 
		\begin{align}
			\text{MAX}_{G\Box H} = \text{MAX}_H. 
			\label{eq:YGH=YHcar}
		\end{align}
		Applying equation~(\ref{eq:k01}) on~$H$, we get
		\begin{align}
			\text{MAX}_H = d_H(\kappa_0(y_1, y_2) - \kappa_{\rm{LLY}}(y_1, y_2)) + 3.
			\label{eq:YHcar}
		\end{align}
		Combine equations~(\ref{eq:k01}),~(\ref{eq:LLYcar}),~(\ref{eq:YGH=YHcar}), and~(\ref{eq:YHcar}),
		\begin{align*}
			\kappa_0((x, y_1), (x, y_2)) & = \frac{d_H}{d_{G\Box H}}\kappa_{\rm LLY}(y_1, y_2) - \frac{1}{d_{G\Box H}}(3 - \text{MAX}_{G\Box H})\\
			& = \frac{d_H}{d_{G\Box H}}\kappa_0(y_1, y_2). 
		\end{align*}
		This concludes the proof.
	\end{proof} 
	
	Finally, let us remark that throughout the paper, we focused on horizontal edges, the curvature results on vertical edges can be obtained by symmetry. 
	
	\bibliographystyle{plainurl}
	\bibliography{paper}

\begin{thebibliography}{10}

\bibitem{bakry2006diffusions}
Dominique Bakry and Michel {\'E}mery.
\newblock Diffusions hypercontractives.
\newblock In {\em S{\'e}minaire de Probabilit{\'e}s XIX 1983/84: Proceedings},
  pages 177--206. Springer, 2006.
\newblock URL: \url{http://eudml.org/doc/113511}.

\bibitem{bhattacharya2015exact}
Bhaswar~B Bhattacharya and Sumit Mukherjee.
\newblock Exact and asymptotic results on coarse {R}icci curvature of graphs.
\newblock {\em Discrete Mathematics}, 338(1):23--42, 2015.
\newblock \href {https://doi.org/10.1016/j.disc.2014.08.012}
  {\path{doi:10.1016/j.disc.2014.08.012}}.

\bibitem{birkhoff1946three}
Garrett Birkhoff.
\newblock Three observations on linear algebra.
\newblock {\em Univ. Nac. Tacuman, Rev. Ser. A}, 5:147--151, 1946.

\bibitem{bonini2020condensed}
Vincent Bonini, Conor Carroll, Uyen Dinh, Sydney Dye, Joshua Frederick, and
  Erin Pearse.
\newblock Condensed {R}icci curvature of complete and strongly regular graphs.
\newblock {\em Involve, a Journal of Mathematics}, 13(4):559--576, 2020.
\newblock \href {https://doi.org/10.2140/involve.2020.13.559}
  {\path{doi:10.2140/involve.2020.13.559}}.

\bibitem{bourne2018ollivier}
David~P Bourne, David Cushing, Shiping Liu, F~M{\"u}nch, and Norbert
  Peyerimhoff.
\newblock Ollivier--{R}icci idleness functions of graphs.
\newblock {\em SIAM Journal on Discrete Mathematics}, 32(2):1408--1424, 2018.
\newblock \href {https://doi.org/10.1137/17M1134469}
  {\path{doi:10.1137/17M1134469}}.

\bibitem{chung1996logarithmic}
Fan~RK Chung and Shing-Tung Yau.
\newblock Logarithmic {H}arnack inequalities.
\newblock {\em Mathematical Research Letters}, 3:793--812, 1996.
\newblock \href {https://doi.org/10.4310/MRL.1996.V3.N6.A8}
  {\path{doi:10.4310/MRL.1996.V3.N6.A8}}.

\bibitem{cushing2021curvatures}
David Cushing, Supanat Kamtue, Riikka Kangaslampi, Shiping Liu, and Norbert
  Peyerimhoff.
\newblock Curvatures, graph products and {R}icci flatness.
\newblock {\em Journal of Graph Theory}, 96(4):522--553, 2021.
\newblock \href {https://doi.org/10.1002/jgt.22630}
  {\path{doi:10.1002/jgt.22630}}.

\bibitem{erbar2012ricci}
Matthias Erbar and Jan Maas.
\newblock Ricci curvature of finite {M}arkov chains via convexity of the
  entropy.
\newblock {\em Archive for Rational Mechanics and Analysis}, 206:997--1038,
  2012.
\newblock \href {https://doi.org/10.1007/s00205-012-0554-z}
  {\path{doi:10.1007/s00205-012-0554-z}}.

\bibitem{forman2003bochner}
Forman.
\newblock Bochner's method for cell complexes and combinatorial {R}icci
  curvature.
\newblock {\em Discrete \& Computational Geometry}, 29:323--374, 2003.
\newblock \href {https://doi.org/10.1007/s00454-002-0743-x}
  {\path{doi:10.1007/s00454-002-0743-x}}.

\bibitem{hehl2024ollivier}
Moritz Hehl.
\newblock Ollivier-{R}icci curvature of regular graphs.
\newblock {\em arXiv preprint arXiv:2407.08854}, 2024.
\newblock \href {https://doi.org/10.48550/arXiv.2407.08854}
  {\path{doi:10.48550/arXiv.2407.08854}}.

\bibitem{jost2014ollivier}
J{\"u}rgen Jost and Shiping Liu.
\newblock Ollivier’s {R}icci curvature, local clustering and
  curvature-dimension inequalities on graphs.
\newblock {\em Discrete \& Computational Geometry}, 51(2):300--322, 2014.
\newblock \href {https://doi.org/10.1007/s00454-013-9558-1}
  {\path{doi:10.1007/s00454-013-9558-1}}.

\bibitem{lin2011ricci}
Yong Lin, Linyuan Lu, and Shing-Tung Yau.
\newblock Ricci curvature of graphs.
\newblock {\em Tohoku Mathematical Journal, Second Series}, 63(4):605--627,
  2011.
\newblock \href {https://doi.org/10.2748/tmj/1325886283}
  {\path{doi:10.2748/tmj/1325886283}}.

\bibitem{lin2010ricci}
Yong Lin and Shing-Tung Yau.
\newblock Ricci curvature and eigenvalue estimate on locally finite graphs.
\newblock {\em Mathematical Research Letters}, 17(2):343--356, 2010.
\newblock \href {https://doi.org/10.4310/MRL.2010.v17.n2.a13}
  {\path{doi:10.4310/MRL.2010.v17.n2.a13}}.

\bibitem{ollivier2009ricci}
Yann Ollivier.
\newblock Ricci curvature of {M}arkov chains on metric spaces.
\newblock {\em Journal of Functional Analysis}, 256(3):810--864, 2009.
\newblock \href {https://doi.org/10.1016/j.jfa.2008.11.001}
  {\path{doi:10.1016/j.jfa.2008.11.001}}.

\bibitem{zhang2023spectrum}
Xiaoxiao Zhang and Zenghui Fang.
\newblock Spectrum and {R}icci curvature on the weighted strong product graphs.
\newblock {\em IEEE Access}, 11:50689--50699, 2023.
\newblock \href {https://doi.org/10.1109/ACCESS.2023.3279097}
  {\path{doi:10.1109/ACCESS.2023.3279097}}.

\end{thebibliography}
	
	\appendix
	
\end{document}